\numberwithin{equation}{section}
\theoremstyle{plain}	     
\newtheorem{thm}{Theorem}[section] 
\newtheorem{lem}[thm]{Lemma}
\theoremstyle{definition}
\theoremstyle{remark} 
\newtheorem{rem}[thm]{Remark}
\newcommand{\sn}{\operatorname{sn}}
\newcommand{\cn}{\operatorname{cn}}
\newcommand{\dn}{\operatorname{dn}}
\newcommand{\sech}{\operatorname{sech}}
\begin{document}

\title{Applications of a duality between generalized trigonometric and hyperbolic functions
\footnote{The work of S.T. was supported by JSPS KAKENHI Grant Number 17K05336.}}
\author{Hiroki Miyakawa and Shingo Takeuchi \\
Department of Mathematical Sciences\\
Shibaura Institute of Technology
\thanks{307 Fukasaku, Minuma-ku,
Saitama-shi, Saitama 337-8570, Japan. \endgraf
{\it E-mail address\/}: shingo@shibaura-it.ac.jp \endgraf
{\it 2010 Mathematics Subject Classification.} 
33E05, 34L40}}


\maketitle

\begin{abstract}
It is shown that generalized trigonometric functions and generalized hyperbolic functions can be transformed from each other. As an application of this transformation, a number of properties for one immediately lead to the corresponding properties for the other. In this way, Mitrinovi\'{c}-Adamovi\'{c}-type inequalities, multiple-angle formulas, and double-angle formulas for both can be produced.
\end{abstract}

\textbf{Keywords:} 
Generalized trigonometric functions,
Generalized hyperbolic functions,
Multiple-angle formulas,
Double-angle formulas,
$p$-Laplacian.


\section{Introduction}
\label{sec:1}

In almost all literature dealing with generalized trigonometric functions and generalized hyperbolic functions, e.g. \cite{Dosly2005,Klen,Neuman2015,YHWL2019}, both are currently studied independently. In this paper, by expanding the range of parameters contained in these functions beyond normal, we show that a duality between the two can be seen and one can be represented using the other, and we apply the duality to investigate formulas of generalized trigonometric and hyperbolic functions.

Before stating our results, we introduce the definitions and
some properties of generalized trigonometric and hyperbolic functions.

Let $q/(q+1)<p<\infty,\ 1<q<\infty$ and 
$$F_{p,q}(y):=\int_0^y \frac{dt}{(1-t^q)^{1/p}}, \quad y \in [0,1).$$
We will denote by $\sin_{p,q}$ the inverse function of $F_{p,q}$, i.e.,
$$\sin_{p,q}{x}:=F_{p,q}^{-1}(x).$$
Clearly, $\sin_{p,q}{x}$ is monotone increasing on
$[0,\pi_{p,q}/2)$ onto $[0,1)$,
where
$$\pi_{p,q}:=
2F_{p,q}(1)=2\int_0^1 \frac{dt}{(1-t^q)^{1/p}}=
\begin{cases}
(2/q)B(1/p^*,1/q), & 1<p<\infty,\\
\infty, & q/(q+1)<p \leq 1,
\end{cases}$$
and $p^*:=p/(p-1)$ and $B$ is the beta function.
In almost literature dealing with generalized trigonometric functions, 
the parameter $p$ is assumed to be $p>1$,
but we here allow it to be $p \leq 1$.
Note that the condition $q/(q+1)<p \leq 1$ implies that $\sin_{p,q}{x}$ is monotone
increasing on $[0,\infty)$ and no longer similar to $\sin{x}$, but to $\tanh{x}$.
Since $\sin_{p,q}{x} \in C^1(0,\pi_{p,q}/2)$,
we also define $\cos_{p,q}{x}$ by 
$$\cos_{p,q}{x}:=\frac{d}{dx}(\sin_{p,q}{x}).$$
Then, it follows that 
\begin{equation}
\label{eq:p+q=1}
\cos_{p,q}^p{x}+\sin_{p,q}^q{x}=1.
\end{equation}
In case $(p,q)=(2,2)$, it is obvious that $\sin_{p,q}{x},\ \cos_{p,q}{x}$ 
and $\pi_{p,q}$ are reduced to the ordinary $\sin{x},\ \cos{x}$ and $\pi$,
respectively. 
This is a reason why these functions and the constant are called
\textit{generalized trigonometric functions} (with parameter $(p,q)$)
and the \textit{generalized $\pi$}, respectively. 

The generalized trigonometric functions are well studied in the context of 
nonlinear differential equations (see \cite{Dosly2005,Lang2016,Kobayashi-Takeuchi,Takeuchi2016} and the references given there). 
Suppose that $u$ is a solution of 
the initial value problem of $p$-Laplacian
\begin{equation}
\label{eq:ivp}
(|u'|^{p-2}u')'+\frac{(p-1)q}{p} |u|^{q-2}u=0, \quad u(0)=0,\ u'(0)=1,
\end{equation}
which is reduced to the equation $u''+u=0$ of simple harmonic motion for
$u=\sin{x}$ in case $(p,q)=(2,2)$.  
Then, 
$$\frac{d}{dx}(|u'|^p+|u|^q)=\left(\frac{p}{p-1}(|u'|^{p-2}u')'+q|u|^{q-2}u\right)u'=0.$$
Therefore, $|u'|^p+|u|^q=1$, hence it is reasonable to define $u$ as 
a generalized sine function and $u'$ as a generalized cosine function.
Indeed, it is possible to show that $u$ coincides with $\sin_{p,q}$ defined as above.
The generalized trigonometric functions are often applied to
the eigenvalue problem of $p$-Laplacian.

In a similar way,
$$G_{p,q}(y):=\int_0^y \frac{dt}{(1+t^q)^{1/p}}, \quad y \in [0,\infty).$$
We will denote by $\sinh_{p,q}$ the inverse function of $G_{p,q}$, i.e.,
$$\sinh_{p,q}{x}:=G_{p,q}^{-1}(x).$$
Clearly, $\sinh_{p,q}{x}$ is monotone increasing
on $[0,\pi_{r,q}/2)$ onto $[0,\infty)$,
where $r$ is the constant determined by
\begin{equation}
\label{eq:r}
\frac{1}{p}+\frac{1}{r}=1+\frac{1}{q}, \quad \mbox{i.e.}, \quad
r=\frac{pq}{pq+p-q}.
\end{equation}
Indeed, by $1+t^q=1/(1-s^q)$,
\begin{align*}
\lim_{x \to \infty}G_{p,q}(x)=
\int_0^\infty \frac{dt}{(1+t^q)^{1/p}}
=\int_0^1 \frac{ds}{(1-s^q)^{1/r}}
=\frac{\pi_{r,q}}{2}.
\end{align*}
The important point to note here that for a fixed $q \in (1,\infty)$,
if $r=r(p)$ is regarded as a function of $p$, then 
\begin{gather}
p>\frac{q}{q+1}\ \Leftrightarrow\ r(p)>\frac{q}{q+1}, \label{eq:r(p)}\\
r(r(p))=p. \label{eq:r=p}
\end{gather}
In particular, $\pi_{r,q}$ has been defined when $p>q/(q+1)$.
Note that the condition $r>1$, i.e., $p<q$, implies that $\sinh_{p,q}{x}$ is defined in
the \textit{bounded} interval $[0,\pi_{r,q}/2)$ with 
$\lim_{x \to \pi_{r,q}/2}\sinh_{p,q}{x}=\infty$ and
no longer similar to $\sinh{x}$, but to $\tan{x}$.
Since $\sinh_{p,q}{x} \in C^1(0,\pi_{r,q}/2)$,
we also define $\cosh_{p,q}{x}$ by 
$$\cosh_{p,q}{x}:=\frac{d}{dx}(\sinh_{p,q}{x}).$$
Then, it follows that 
\begin{equation}
\label{eq:p-q=1}
\cosh_{p,q}^p{x}-\sinh_{p,q}^q{x}=1.
\end{equation}
In case $(p,q)=(2,2)$, it is obvious that $\sinh_{p,q}{x},\ \cosh_{p,q}{x}$ 
and the interval $[0,\pi_{r,q}/2)$ are reduced to 
$\sinh{x},\ \cosh{x}$ and $[0,\infty)$, respectively. 
This is a reason why these functions are called
\textit{generalized hyperbolic functions} (with parameter $(p,q)$). 
Just as $\sin_{p,q}{x}$ satisfies \eqref{eq:ivp}, 
$\sinh_{p,q}{x}$ is a solution of 
the initial value problem of $p$-Laplacian
$$(|u'|^{p-2}u')'-\frac{(p-1)q}{p} |u|^{q-2}u=0, \quad u(0)=0,\ u'(0)=1.$$

We are interested in finding a duality between 
generalized trigonometric and hyperbolic functions.

Let us start with a brief observation.
Since $F_{1,2}(x)=\tanh^{-1}{x}$, it is easily seen that
$$\sin_{1,2}{x}=\tanh{x}, \quad \cos_{1,2}{x}=\frac{1}{\cosh^2{x}}.$$
This yields that the hyperbolic functions can be written in terms of
the generalized trigonometric functions as
$$\sinh{x}=\frac{\sin_{1,2}{x}}{\sqrt{\cos_{1,2}{x}}}, 
\quad \cosh{x}=\frac{1}{\sqrt{\cos_{1,2}{x}}}.$$
From this observation, it is expected that any generalized hyperbolic function can be translated in terms of generalized trigonometric functions. In fact, 
we can prove the following crucial lemmas that play an important role in this paper.
\begin{lem}
\label{lem:sinhtosin}
Let $q/(q+1)<p<\infty,\ 1<q<\infty$ and $r$ be the number defined in \eqref{eq:r}. 
Then, for $x \in [0,\pi_{r,q}/2)$,
$$\sinh_{p,q}{x}=\frac{\sin_{r,q}{x}}{\cos_{r,q}^{r/q}{x}},\quad
\cosh_{p,q}{x}=\frac{1}{\cos_{r,q}^{r/p}{x}}.$$
\end{lem}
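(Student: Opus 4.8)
The plan is to verify the first identity by a change of variables in the integral defining $\sinh_{p,q}$, and then to deduce the second identity algebraically from the first.

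First I would set $y(x):=\sin_{r,q}{x}/\cos_{r,q}^{r/q}{x}$ for $x\in[0,\pi_{r,q}/2)$. Since $p>q/(q+1)$, \eqref{eq:r(p)} gives $r>q/(q+1)$, so $\sin_{r,q}$, $\cos_{r,q}$ and $\pi_{r,q}$ are defined, and $\cos_{r,q}{x}>0$ on $[0,\pi_{r,q}/2)$; hence $y$ is well defined, nonnegative, and $y(0)=0$. Using \eqref{eq:p+q=1} with $p$ replaced by $r$, namely $\cos_{r,q}^r{x}+\sin_{r,q}^q{x}=1$, one can rewrite $\cos_{r,q}^{r/q}{x}=(1-\sin_{r,q}^q{x})^{1/q}$, so that $y(x)=\sin_{r,q}{x}\,(1-\sin_{r,q}^q{x})^{-1/q}$. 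Because $\sinh_{p,q}=G_{p,q}^{-1}$, the goal is then to show $G_{p,q}(y(x))=x$.

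The key step is the substitution $t=\sigma(1-\sigma^q)^{-1/q}$, which is a $C^1$ increasing bijection of $[0,1)$ onto $[0,\infty)$ and sends $\sigma=\sin_{r,q}{x}$ to $t=y(x)$. Under it, $1+t^q=(1-\sigma^q)^{-1}$ and $dt=(1-\sigma^q)^{-1-1/q}\,d\sigma$, so
$$G_{p,q}(y(x))=\int_0^{y(x)}\frac{dt}{(1+t^q)^{1/p}}=\int_0^{\sin_{r,q}{x}}(1-\sigma^q)^{\frac1p-1-\frac1q}\,d\sigma=\int_0^{\sin_{r,q}{x}}\frac{d\sigma}{(1-\sigma^q)^{1/r}},$$
where the last equality is precisely the relation \eqref{eq:r} defining $r$. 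The right-hand side equals $F_{r,q}(\sin_{r,q}{x})=x$, which proves $y(x)=\sinh_{p,q}{x}$. For the second identity I would then invoke \eqref{eq:p-q=1}: $\cosh_{p,q}^p{x}=1+\sinh_{p,q}^q{x}=1+\sin_{r,q}^q{x}/\cos_{r,q}^r{x}=1/\cos_{r,q}^r{x}$ by \eqref{eq:p+q=1}, and taking the positive $p$-th root (legitimate since $\cosh_{p,q}{x}>0$) gives $\cosh_{p,q}{x}=\cos_{r,q}^{-r/p}{x}$. As an alternative, one may prove both identities simultaneously by differentiating $y$, using $(\sin_{r,q}{x})'=\cos_{r,q}{x}$ and the formula for $(\cos_{r,q}{x})'$ obtained from \eqref{eq:p+q=1}, to reach $y'(x)=\cos_{r,q}^{-r/p}{x}=(1+y(x)^q)^{1/p}$, whence $\frac{d}{dx}G_{p,q}(y(x))=1$ and $G_{p,q}(y(0))=0$.

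The only genuine obstacle is bookkeeping: one must confirm that $t=\sigma(1-\sigma^q)^{-1/q}$ is a bona fide bijection on the ranges involved — this needs $\sin_{r,q}{x}\in[0,1)$, which holds exactly for $x\in[0,\pi_{r,q}/2)$, the common domain of the two sides of the lemma — and that the exponents collapse correctly. The identity $\tfrac1p-1-\tfrac1q=-\tfrac1r$ is the single place where the specific value $r=pq/(pq+p-q)$ enters, and it is the same computation that underlies $\lim_{x\to\infty}G_{p,q}(x)=\pi_{r,q}/2$ recorded above; everything else is routine.
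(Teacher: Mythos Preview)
Your proof is correct and follows essentially the same approach as the paper: both rely on the single substitution $t=\sigma(1-\sigma^q)^{-1/q}$ (equivalently $1-\sigma^q=1/(1+t^q)$) to identify $G_{p,q}$ with $F_{r,q}$, using the exponent relation $1/p-1-1/q=-1/r$, and then invoke \eqref{eq:p-q=1} for the $\cosh_{p,q}$ identity. The only cosmetic difference is direction---the paper substitutes in $F_{r,q}$ to reach $G_{p,q}$, while you substitute in $G_{p,q}$ to reach $F_{r,q}$.
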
 

\begin{lem}
\label{lem:sintosinh}
Let $q/(q+1)<p<\infty,\ 1<q<\infty$ and $r$ be the number defined in \eqref{eq:r}. 
Then, for $x \in [0,\pi_{p,q}/2)$,
$$\sin_{p,q}{x}=\frac{\sinh_{r,q}{x}}{\cosh_{r,q}^{r/q}{x}},\quad
\cos_{p,q}{x}=\frac{1}{\cosh_{r,q}^{r/p}{x}}.$$
\end{lem}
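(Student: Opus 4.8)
The plan is to deduce Lemma~\ref{lem:sintosinh} from Lemma~\ref{lem:sinhtosin} by exploiting the fact, recorded in \eqref{eq:r=p}, that the map $p\mapsto r(p)$ determined by \eqref{eq:r} is an involution on $(q/(q+1),\infty)$. Lemma~\ref{lem:sinhtosin} expresses the generalized hyperbolic functions with parameter $(p,q)$ through the generalized trigonometric functions with parameter $(r,q)$; if we instead apply that lemma with the parameter $r$ in place of $p$, its ``dual'' parameter becomes $r(r(p))=p$, and we obtain $\sinh_{r,q}$ and $\cosh_{r,q}$ expressed through $\sin_{p,q}$ and $\cos_{p,q}$. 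Solving the resulting two relations for $\sin_{p,q}$ and $\cos_{p,q}$ gives exactly the asserted formulas.

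In detail, I would first check that Lemma~\ref{lem:sinhtosin} may legitimately be applied with $(p,q)$ replaced by $(r,q)$: by \eqref{eq:r(p)} we have $r\in(q/(q+1),\infty)$, and by \eqref{eq:r=p} the number associated with $r$ via \eqref{eq:r} is $r(r(p))=p$; in particular the interval on which the identities hold becomes $[0,\pi_{p,q}/2)$, matching the statement. This yields, for $x\in[0,\pi_{p,q}/2)$,
$$\sinh_{r,q}{x}=\frac{\sin_{p,q}{x}}{\cos_{p,q}^{p/q}{x}},\qquad
\cosh_{r,q}{x}=\frac{1}{\cos_{p,q}^{p/r}{x}}.$$
It then remains to invert these. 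Since $\cos_{p,q}{x}=(1-\sin_{p,q}^q{x})^{1/p}>0$ on $[0,\pi_{p,q}/2)$ (and $\cosh_{r,q}{x}\ge 1$), the second relation rearranges to $\cos_{p,q}{x}=\cosh_{r,q}^{-r/p}{x}$, which is the second claimed formula; substituting this into the first relation and using $(\cosh_{r,q}^{-r/p}{x})^{p/q}=\cosh_{r,q}^{-r/q}{x}$ gives $\sin_{p,q}{x}=\sinh_{r,q}{x}\,\cosh_{r,q}^{-r/q}{x}$, the first claimed formula.

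I do not anticipate a real obstacle: the mathematical content is entirely carried by Lemma~\ref{lem:sinhtosin} together with the involution identity \eqref{eq:r=p}, and everything else is bookkeeping of fractional exponents. The only points needing care are (i) confirming that the substitution $p\mapsto r$ respects all the hypotheses and transforms the domain correctly under $r\mapsto r(r)=p$, and (ii) handling the fractional powers cleanly when inverting. One could alternatively prove the identities directly by differentiating the right-hand sides and verifying the defining relations $\cos_{p,q}^p{x}+\sin_{p,q}^q{x}=1$ and $(\sin_{p,q}{x})'=\cos_{p,q}{x}$ with the correct initial values, but going through Lemma~\ref{lem:sinhtosin} is shorter and avoids redoing that computation.
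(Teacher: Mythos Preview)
Your proposal is correct and essentially identical to the paper's own proof: the paper likewise applies Lemma~\ref{lem:sinhtosin} with $p$ replaced by $r$, uses \eqref{eq:r(p)} and \eqref{eq:r=p} to check the hypotheses and the domain $[0,\pi_{p,q}/2)$, and then inverts the resulting relations to obtain the two claimed formulas.
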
 

Lemmas \ref{lem:sinhtosin} and \ref{lem:sintosinh} tell us the counterparts to generalized hyperbolic functions of the properties already known for generalized trigonometric functions, and vice versa.
For example, Lemma \ref{lem:sintosinh} immediately converts 
\eqref{eq:p+q=1} into \eqref{eq:p-q=1} (with $p$ replaced by $r$), that is,
$$\cos_{p,q}^p{x}+\sin_{p,q}^q{x}=1$$
into 
$$\cosh_{r,q}^r{x}-\sinh_{r,q}^q{x}=1;$$
and Lemma \ref{lem:sinhtosin} (with \eqref{eq:r=p}) does vice versa.

Using this idea with our lemmas, we will show the following generalization 
of Mitrinovi\'{c}-Adamovi\'{c} inequality. 

\begin{thm}
\label{thm:mai}
Let $q/(q+1)<p<\infty$ and $1<q<\infty$. Then, 
for $x \in (0,\pi_{p,q}/2)$,
\begin{equation}
\label{eq:mai}
\cos_{p,q}^{1/(q+1)}{x}<\frac{\sin_{p,q}{x}}{x}<1.
\end{equation}
Moreover, for $x \in (0,\pi_{r,q}/2)$, 
where $r$ is the number defined in \eqref{eq:r},
\begin{equation}
\label{eq:maih}
\cosh_{p,q}^{1/(q+1)}{x}<\frac{\sinh_{p,q}{x}}{x}<\cosh_{p,q}^{p/q}{x}.
\end{equation}
\end{thm}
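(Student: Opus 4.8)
The plan is to prove the trigonometric estimate \eqref{eq:mai} directly from the defining integral, and then to read off the hyperbolic estimate \eqref{eq:maih} from it by means of Lemma \ref{lem:sinhtosin}. The upper bound in \eqref{eq:mai} is the easy half: on $(0,\pi_{p,q}/2)$ one has $\cos_{p,q}^p{x}=1-\sin_{p,q}^q{x}\in(0,1)$ by \eqref{eq:p+q=1}, hence $0<\cos_{p,q}{t}<1$, and integrating $\cos_{p,q}=(\sin_{p,q})'$ from $0$ gives $\sin_{p,q}{x}=\int_0^x\cos_{p,q}{t}\,dt<x$. For the lower bound I would substitute $s=\sin_{p,q}{x}$, which runs over $(0,1)$ as $x$ runs over $(0,\pi_{p,q}/2)$; then $x=\int_0^s(1-t^q)^{-1/p}\,dt$ and $\cos_{p,q}{x}=(1-s^q)^{1/p}$, so that \eqref{eq:mai} is equivalent to
\[
\phi(s):=\frac{s}{(1-s^q)^{1/(p(q+1))}}-\int_0^s\frac{dt}{(1-t^q)^{1/p}}>0,\qquad s\in(0,1).
\]

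Since $\phi(0)=0$, it suffices to show $\phi'>0$ on $(0,1)$. Differentiating, pulling out the positive factor $(1-s^q)^{-1-1/(p(q+1))}$, and writing $\beta:=\frac{q}{p(q+1)}$ (so that $\frac1p-\frac1{p(q+1)}=\beta$), one obtains
\[
\phi'(s)=(1-s^q)^{-1-1/(p(q+1))}\Bigl[(1-\beta)(1-s^q)+\beta-(1-s^q)^{1-\beta}\Bigr].
\]
Now the standing hypothesis $p>q/(q+1)$ is exactly the statement $\beta\in(0,1)$, whence $1-\beta\in(0,1)$, and the weighted arithmetic--geometric mean inequality $v^{1-\beta}<(1-\beta)v+\beta$ for $v\in(0,1)$, applied with $v=1-s^q\in(0,1)$, shows the bracket is strictly positive. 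Hence $\phi'>0$ on $(0,1)$, proving \eqref{eq:mai}.

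For \eqref{eq:maih} I would apply the estimate \eqref{eq:mai} just obtained, but with the pair $(r,q)$ in place of $(p,q)$ — this is legitimate since $r>q/(q+1)$ by \eqref{eq:r(p)} — to get $\cos_{r,q}^{1/(q+1)}{x}<\sin_{r,q}{x}/x<1$ for $x\in(0,\pi_{r,q}/2)$. Inserting the identities of Lemma \ref{lem:sinhtosin}, $\sinh_{p,q}{x}=\sin_{r,q}{x}\,\cos_{r,q}^{-r/q}{x}$ and $\cosh_{p,q}{x}=\cos_{r,q}^{-r/p}{x}$, and multiplying the two-sided bound through by the positive quantity $\cos_{r,q}^{-r/q}{x}$, one gets
\[
\cos_{r,q}^{1/(q+1)-r/q}{x}<\frac{\sinh_{p,q}{x}}{x}<\cos_{r,q}^{-r/q}{x}.
\]
It then remains only to match exponents: $\cos_{r,q}^{-r/q}{x}=\bigl(\cos_{r,q}^{-r/p}{x}\bigr)^{p/q}=\cosh_{p,q}^{p/q}{x}$, while \eqref{eq:r} rearranges to $\frac1{q+1}-\frac rq=-\frac r{p(q+1)}$, so $\cos_{r,q}^{1/(q+1)-r/q}{x}=\bigl(\cos_{r,q}^{-r/p}{x}\bigr)^{1/(q+1)}=\cosh_{p,q}^{1/(q+1)}{x}$; this is precisely \eqref{eq:maih}.

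The one step I expect to be genuinely delicate is the sign analysis of $\phi'$: one must spot that the ungainly exponent collapses to $1-\beta$ and that the hypothesis $p>q/(q+1)$ is exactly what puts $\beta$ in the borderline range $(0,1)$ in which the weighted AM--GM estimate holds. The exponent bookkeeping in the hyperbolic step, though easy to get wrong by a sign, is entirely forced by the relation \eqref{eq:r}.
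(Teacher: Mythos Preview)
Your proof is correct and follows the same overall strategy as the paper: study the auxiliary function $x-\sin_{p,q}{x}\,\cos_{p,q}^{-1/(q+1)}{x}$ for \eqref{eq:mai}, then transport the result to \eqref{eq:maih} via the duality lemmas. After your substitution $s=\sin_{p,q}{x}$, your $\phi(s)$ is exactly the paper's $-f(x)$, so the functions coincide; and your use of Lemma~\ref{lem:sinhtosin} for the hyperbolic step is equivalent to the paper's use of Lemma~\ref{lem:sintosinh} with $p$ replaced by $r$, by the involution \eqref{eq:r=p}.

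The one genuine difference is in the sign analysis. The paper works in the $x$-variable, computes $f'$ and $f''$, and observes that $f''<0$ because $pq+p-q>0$; two integrations then give $f<0$. You instead take one derivative in the $s$-variable and recognise the bracket $(1-\beta)(1-s^q)+\beta-(1-s^q)^{1-\beta}$ as a weighted AM--GM remainder, with the hypothesis $p>q/(q+1)$ translating precisely to $\beta\in(0,1)$. Your route is a shade cleaner --- one derivative and a classical inequality rather than two derivatives --- while the paper's second-derivative computation makes the role of the quantity $pq+p-q$ (equivalently, of $r>0$) more visibly explicit.
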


Inequalities \eqref{eq:mai} and \eqref{eq:maih} were proved by Kl\'{e}n
et al. \cite[Theorems 3.6 and 3.8]{Klen}
when $p=q>1$; and Neuman \cite[Theorem 6.1 and (6.9)]{Neuman2015} 
when $p,\ q>1$ (with better upper bounds).  
In our approach, 
Theorem \ref{thm:mai} allows us to obtain the same inequalities 
over the wider range of parameters $q/(q+1)<p<\infty$ and $1<q<\infty$,
and \eqref{eq:maih} immediately follows from \eqref{eq:mai} 
by Lemma \ref{lem:sintosinh}.

As a further application of our lemmas, the following theorem is 
obtained by deriving the identities \eqref{eq:mafsinh} and \eqref{eq:mafcosh} below
of the generalized hyperbolic 
functions corresponding to the multiple-angle formulas 
\eqref{eq:mafsin} and \eqref{eq:mafcos} of the 
generalized trigonometric functions in \cite[Theorem 1.1]{Takeuchi2016}.

\begin{thm}
\label{thm:maf1}
Let $1<q<\infty$. Then, for $x \in [0,\pi_{2,q}/2^{2/q+1})=[0,\pi_{q^*,q}/4)$, 
\begin{gather}
\sin_{2,q}{(2^{2/q}x)}=2^{2/q}\sin_{q^*,q}{x}\cos_{q^*,q}^{q^*-1}{x}, \label{eq:mafsin}\\
\cos_{2,q}{(2^{2/q}x)}=\cos_{q^*,q}^{q^*}{x}-\sin_{q^*,q}^q{x}. \label{eq:mafcos}
\end{gather}
Moreover, for the same $x$,
\begin{gather}
\sinh_{2q/(q+2),q}{(2^{2/q}x)}=\frac{2^{2/q}\sinh_{q/2,q}{x}}{(1-\sinh_{q/2,q}^q{x})^{2/q}}, \label{eq:mafsinh} \\
\cosh_{2q/(q+2),q}{(2^{2/q}x)}
=\left(\frac{1+\sinh_{q/2,q}^q{x}}{1-\sinh_{q/2,q}^q{x}}\right)^{2/q+1}.
\label{eq:mafcosh}
\end{gather}
\end{thm}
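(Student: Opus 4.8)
The plan is to regard the trigonometric identities \eqref{eq:mafsin} and \eqref{eq:mafcos} as already established, since they are precisely \cite[Theorem 1.1]{Takeuchi2016}, and then to derive \eqref{eq:mafsinh} and \eqref{eq:mafcosh} by feeding the expressions for $\sin_{p,q}$ and $\cos_{p,q}$ from Lemma \ref{lem:sintosinh} into them. The one preliminary observation is the identification of intervals $[0,\pi_{2,q}/2^{2/q+1})=[0,\pi_{q^*,q}/4)$, i.e.\ $\pi_{2,q}/\pi_{q^*,q}=2^{2/q-1}$, which follows from the formula $\pi_{p,q}=(2/q)B(1/p^*,1/q)$ recalled above (note $(q^*)^*=q$) together with the duplication formula for the gamma function.

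First I would write down the two instances of the duality that are required. Taking $p=2$ in \eqref{eq:r} gives $r=2q/(q+2)$, and Lemma \ref{lem:sintosinh} yields, for $x\in[0,\pi_{2,q}/2)$,
\[
\sin_{2,q}{x}=\frac{\sinh_{2q/(q+2),q}{x}}{\cosh_{2q/(q+2),q}^{2/(q+2)}{x}},
\qquad
\cos_{2,q}{x}=\frac{1}{\cosh_{2q/(q+2),q}^{q/(q+2)}{x}};
\]
taking $p=q^*$ in \eqref{eq:r} gives $r=q/2$, and Lemma \ref{lem:sintosinh} yields, for $x\in[0,\pi_{q^*,q}/2)$,
\[
\sin_{q^*,q}{x}=\frac{\sinh_{q/2,q}{x}}{\cosh_{q/2,q}^{1/2}{x}},
\qquad
\cos_{q^*,q}{x}=\frac{1}{\cosh_{q/2,q}^{(q-1)/2}{x}}.
\]
Here one checks that the parameters are admissible ($q/2>q/(q+1)$ and $q^*>q/(q+1)$), that $x\mapsto 2^{2/q}x$ maps $[0,\pi_{2,q}/2^{2/q+1})$ onto $[0,\pi_{2,q}/2)$, and that $[0,\pi_{q^*,q}/4)\subset[0,\pi_{q^*,q}/2)$, so that every function above is evaluated inside its interval of definition.

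Then I would substitute and simplify. The conjugacy relations $q^*-1=1/(q-1)$ and $(q-1)q^*/2=q/2$ make the exponents on the right of \eqref{eq:mafsin} and \eqref{eq:mafcos} collapse, giving $\cos_{q^*,q}^{q^*-1}{x}=\cosh_{q/2,q}^{-1/2}{x}$ and $\cos_{q^*,q}^{q^*}{x}=\cosh_{q/2,q}^{-q/2}{x}$; combined with \eqref{eq:p-q=1} for the parameter $q/2$, namely $\cosh_{q/2,q}^{q/2}{x}=1+\sinh_{q/2,q}^q{x}$, the right-hand sides of \eqref{eq:mafsin} and \eqref{eq:mafcos} become rational functions of $\sinh_{q/2,q}^q{x}$. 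In particular \eqref{eq:mafcos} turns into $\cos_{2,q}{(2^{2/q}x)}=(1-\sinh_{q/2,q}^q{x})/(1+\sinh_{q/2,q}^q{x})$; inserting this into the first display, i.e.\ inverting and raising to the power $(q+2)/q=2/q+1$, produces \eqref{eq:mafcosh}. Treating \eqref{eq:mafsin} the same way, and using $\sinh_{2q/(q+2),q}{x}=\sin_{2,q}{x}/\cos_{2,q}^{2/q}{x}$ (again from the first display), gives \eqref{eq:mafsinh} once the powers of $1+\sinh_{q/2,q}^q{x}$ cancel.

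I expect the only real work to be the bookkeeping of exponents — every power of a generalized cosine has to be pushed through Lemma \ref{lem:sintosinh} with the correct ratios $r/p$ and $r/q$, and \eqref{eq:p-q=1} must be used with parameter $q/2$ rather than $q$ — together with keeping track of the argument $2^{2/q}x$ against $x$. No analytic input beyond Lemma \ref{lem:sintosinh} and \cite[Theorem 1.1]{Takeuchi2016} is needed.
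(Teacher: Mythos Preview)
Your proposal is correct and follows essentially the same route as the paper: cite \cite{Takeuchi2016} for \eqref{eq:mafsin}--\eqref{eq:mafcos}, then translate both sides into hyperbolic form via the duality lemmas with the choices $r(2)=2q/(q+2)$ and $r(q^*)=q/2$, simplifying with \eqref{eq:p-q=1} for parameter $q/2$. The paper organizes the computation slightly differently---it applies Lemma~\ref{lem:sinhtosin} to the left-hand side and Lemma~\ref{lem:sintosinh} to the right-hand side, obtaining \eqref{eq:mafsinh} first and then reading off \eqref{eq:mafcosh} directly from \eqref{eq:p-q=1}---whereas you substitute Lemma~\ref{lem:sintosinh} into both sides and solve for $\cosh$ first; but this is only a reordering, not a different idea.
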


\begin{rem}
In case of $q=2$, they are reduced to the double-angle formula
of $\sin,\ \cos,\ \tan$ and $\sec^2$.
\end{rem}

The multiple-angle formulas of the same type as \eqref{eq:mafsin} and \eqref{eq:mafcos} for generalized hyperbolic functions have not been found 
so far. In parallel with Theorem \ref{thm:maf1}, we 
establish them and furthermore deduce the corresponding formulas 
of the same type as \eqref{eq:mafsinh} and \eqref{eq:mafcosh}
for generalized trigonometric identities.

\begin{thm}
\label{thm:maf2}
Let $1<q<\infty$. Then, for $x \in [0,\pi_{2q/(q+2),q}/2^{2/q+1})=[0,\pi_{q/2,q}/2)$,
\begin{gather*} 
\sinh_{2,q}{(2^{2/q}x)}=2^{2/q}\sinh_{q^*,q}{x}\cosh_{q^*,q}^{q^*-1}{x},\\
\cosh_{2,q}{(2^{2/q}x)}=\cosh_{q^*,q}^{q^*}{x}+\sinh_{q^*,q}^q{x}.
\end{gather*}
Moreover, for the same $x$,
\begin{gather*}
\sin_{2q/(q+2),q}{(2^{2/q}x)}=\frac{2^{2/q}\sin_{q/2,q}{x}}{(1+\sin_{q/2,q}^q{x})^{2/q}},\\
\cos_{2q/(q+2),q}{(2^{2/q}x)}
=\left(\frac{1-\sin_{q/2,q}^q{x}}{1+\sin_{q/2,q}^q{x}}\right)^{2/q+1}.
\end{gather*}
\end{thm}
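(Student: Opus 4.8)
\emph{The plan is to mirror the proof of Theorem \ref{thm:maf1}, but run in the reverse direction:} first establish the two hyperbolic multiple-angle identities (those not involving $\sin$, $\cos$) by a direct computation with the defining integral $G_{2,q}$, and then feed them into Lemma \ref{lem:sinhtosin} to read off the two trigonometric identities. Before that, note that the two descriptions of the interval agree: by \eqref{eq:r} one has $r(2)=2q/(q+2)$ and $r(q^*)=q/2$, so $\sinh_{2,q}$ lives on $[0,\pi_{2q/(q+2),q}/2)$ and $\sinh_{q^*,q}$ on $[0,\pi_{q/2,q}/2)$; writing $\pi_{p,q}=(2/q)B(1/p^*,1/q)$ and putting $a=(q-2)/(2q)$, the claimed equality $\pi_{2q/(q+2),q}/2^{2/q+1}=\pi_{q/2,q}/2$ becomes $\Gamma(a)\Gamma(a+1/2)=2^{1-2a}\sqrt{\pi}\,\Gamma(2a)$, i.e.\ the Legendre duplication formula (the same Beta-function identity underlying the interval in Theorem \ref{thm:maf1}); when $1<q\le 2$ both $2q/(q+2)$ and $q/2$ lie in $(q/(q+1),1]$, so both sides are $+\infty$.

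For the hyperbolic pair I would set $y=\sinh_{q^*,q}x$, so that $x=G_{q^*,q}(y)$ and $\cosh_{q^*,q}x=(1+y^q)^{1/q^*}$; since $q^*-1=q^*/q$, the target identity $\sinh_{2,q}(2^{2/q}x)=2^{2/q}\sinh_{q^*,q}x\,\cosh_{q^*,q}^{q^*-1}x$ is equivalent to
\[
G_{2,q}\!\left(2^{2/q}y(1+y^q)^{1/q}\right)=2^{2/q}G_{q^*,q}(y),\qquad y\in[0,\infty).
\]
Both sides vanish at $y=0$, and on differentiating in $y$ and using
\[
1+\left(2^{2/q}y(1+y^q)^{1/q}\right)^q=(1+2y^q)^2,\qquad \frac{d}{dy}\bigl[y(1+y^q)^{1/q}\bigr]=(1+y^q)^{1/q-1}(1+2y^q),
\]
both derivatives collapse to $2^{2/q}(1+y^q)^{-1/q^*}$ (recall $1/q-1=-1/q^*$); hence the identity holds. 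Differentiating it once more in $x$ and chasing exponents with \eqref{eq:p-q=1} (for $p=q^*$) turns $2^{2/q}\cosh_{2,q}(2^{2/q}x)$ into $2^{2/q}(1+2\sinh_{q^*,q}^q x)=2^{2/q}(\cosh_{q^*,q}^{q^*}x+\sinh_{q^*,q}^q x)$, which after cancelling $2^{2/q}$ is the second hyperbolic identity. (Alternatively one can verify that $x\mapsto 2^{2/q}\sinh_{q^*,q}x\,\cosh_{q^*,q}^{q^*-1}x$ solves $w''=2^{4/q-1}q|w|^{q-2}w$ with $w(0)=0$, $w'(0)=2^{2/q}$, and invoke uniqueness.)

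To obtain the trigonometric pair I would substitute Lemma \ref{lem:sinhtosin} into the hyperbolic pair, using $p=2$ (so $r=2q/(q+2)$) on the left-hand sides and $p=q^*$ (so $r=q/2$) on the right-hand sides. Cleaning up with the elementary relations $(q-1)(q^*-1)=1$ and $(q-1)q^*=q$, and using \eqref{eq:p+q=1} for $p=q/2$ in the form $\cos_{q/2,q}^{q/2}x=1-\sin_{q/2,q}^q x$, the first hyperbolic identity becomes $\sin_{2q/(q+2),q}(2^{2/q}x)=2^{2/q}\sin_{q/2,q}x\,(1+\sin_{q/2,q}^q x)^{-2/q}$ and the second becomes $\cos_{2q/(q+2),q}(2^{2/q}x)=\bigl((1-\sin_{q/2,q}^q x)/(1+\sin_{q/2,q}^q x)\bigr)^{2/q+1}$, which are the asserted formulas.

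\emph{The only genuinely new input is the hyperbolic pair}, and the point to watch is that it cannot be produced from the known identities \eqref{eq:mafsin}--\eqref{eq:mafcos} merely by invoking the lemmas: applying Lemma \ref{lem:sintosinh} to \eqref{eq:mafsin}--\eqref{eq:mafcos} yields \eqref{eq:mafsinh}--\eqref{eq:mafcosh}, i.e.\ identities at the parameters $2q/(q+2)$ and $q/2$, whereas the pair needed here sits at the parameters $2$ and $q^*$. So the substance of the proof is the integral identity displayed above; granting it, everything else is the routine exponent bookkeeping already used for Theorem \ref{thm:maf1}, and I do not anticipate a real obstacle there.
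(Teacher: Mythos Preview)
Your proposal is correct and follows essentially the paper's route: the core is the same integral identity $\sinh_{q^*,q}^{-1}(y)=2^{-2/q}\sinh_{2,q}^{-1}\bigl(2^{2/q}y(1+y^q)^{1/q}\bigr)$, which the paper verifies by the substitution $t^q=\bigl((1+s^q)^{1/2}-1\bigr)/2$ and you verify by differentiating both sides, after which the trigonometric pair is read off via Lemmas~\ref{lem:sinhtosin}--\ref{lem:sintosinh} just as in the proof of Theorem~\ref{thm:maf1}. The one noteworthy difference is the interval check: the paper obtains $\pi_{q/2,q}/2=\pi_{2q/(q+2),q}/2^{2/q+1}$ simply by letting $y\to\infty$ in the integral identity itself, which is slicker than invoking the Legendre duplication formula.
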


\begin{rem}
In case of $q=2$, they are reduced to the double-angle formula
of $\sinh,\ \cosh,\ \tanh$ and $\sech^2$.
\end{rem}

Double-angle formulas for generalized trigonometric functions,
along with the addition theorems, are of great interest, but few are available except for special parameters.
So far, only the double-angle formulas of $\sin_{q^*,2},\ \sin_{2,q}$ and $\sin_{q^*,q},\ 
q=2,3,4,6$, have been shown (Table \ref{tab:(p,q)}). 
For more details, see \cite{Takeuchipre} and the references given there.
Owing to Lemmas \ref{lem:sinhtosin}, \ref{lem:sintosinh}
and Theorems \ref{thm:maf1} and \ref{thm:maf2}, 
we succeed in finding the double-angle formulas of
$\sin_{3/2,6},\ \sin_{3,6}$ and $\sin_{6/5,3}$ (Table \ref{doubleanglehyo}), 
and the counterparts
for generalized hyperbolic functions (Table \ref{GTFGHFdobleanglehyo}
and Remark \ref{rem:th}). 
In our past papers we did not give the double-angle formula for 
$\sin_{3/2,2}$, but the present paper solves that too.


\begin{table}[!htb]
\centering
\begin{tabular}{|c|l|l|l|}
\noalign{\hrule height0.8pt}
$q$ & $(q^*,2)$ & $(2,q)$ & $(q^*,q)$  \\
\hline
$2$ & $(2,2)$ by Abu al-Wafa' & $(2,2)$ by Abu al-Wafa' & $(2,2)$ by Abu al-Wafa' \\
$3$ & $(3/2,2)$ \textbf{Theorem \ref{thm:3/2,2}} & $(2,3)$ by Cox-Shurman & $(3/2,3)$ by Dixon \\
$4$ & $(4/3,2)$ by Sato-Takeuchi & $(2,4)$ by Fagnano & $(4/3,4)$ by Edmunds et al. \\
$6$ & $(6/5,2)$ by Takeuchi & $(2,6)$ by Shinohara & $(6/5,6)$ by Takeuchi \\
\noalign{\hrule height0.8pt}
\end{tabular}
\caption{The parameters for which the double-angle formulas of GTF have been obtained.}
\label{tab:(p,q)}
\end{table}

\begin{table}[!htb]
\centering
  \begin{tabular}{|c|l|l|l|} \hline
    $q$ & $(q/2,q)$ & $(2q/(2+q),q)$ & $(2q/(2+q),2)$ \\ \hline 
    $2$ & $(1,2)$ by V. Riccati & $(1,2)$ by V. Riccati & $(1,2)$ by V. Riccati\\
    $3$ & $(3/2,3)$ by Dixon & $(6/5,3)$ \textbf{Theorem \ref{lem:6/5,3}} & $(6/5,2)$ by Takeuchi\\
    $4$ & $(2,4)$ by Fagnano & $(4/3,4)$ by Edmunds et al. & $(4/3,2)$ by Sato-Takeuchi\\ 
    $6$ & $(3,6)$ \textbf{Theorem \ref{lem:3,6}} & $(3/2,6)$ \textbf{Theorem \ref{lem:3/2,6}} & $(3/2,2)$ \textbf{Theorem \ref{thm:3/2,2}}\\\hline
\end{tabular}
\caption{The parameters for which the double-angle formulas of GTF have been obtained.}
\label{doubleanglehyo}
\end{table}

\begin{table}[!htb]
\centering
  \begin{tabular}{|c|c|c|} \hline
    GTF & $\longleftrightarrow$ & GHF  \\ \hline 
    $(q^*,2)$ &  & $(2q/(2+q),2)$ \\
    $(2,q)$ &  & $(2q/(2+q),q)$ \\
    $(q^*,q)$ & $\longleftrightarrow$ & $(q/2,q)$ \\ 
    $(q/2,q)$ & & $(q^*,q)$ \\
    $(2q/(2+q),q)$ & & $(2,q)$ \\
    $(2q/(2+q),2)$ & & $(q^*,2)$ \\\hline
\end{tabular}
\caption{The parameters that can be converted in 
Lemmas \ref{lem:sinhtosin} and \ref{lem:sintosinh}.}
\label{GTFGHFdobleanglehyo}
\end{table}

This paper is organized as follows.
In Section \ref{sec:2}, we will prove the lemmas and theorems introduced in Section \ref{sec:1}. Section \ref{sec:3} is devoted to using these results to produce new double-angle formulas for generalized trigonometric and hyperbolic functions. Finally, Section \ref{sec:4} gives some notes on the generalization of the tangent function.


\section{Proofs of Results}
\label{sec:2}

In this section, we will prove the lemmas and theorems introduced in Section 1. 
Properties \eqref{eq:r(p)} and \eqref{eq:r=p} are used frequently.

\begin{proof}[Proof of Lemma \ref{lem:sinhtosin}]
Let $p>q/(q+1)$. 
Then, $r>q/(q+1)$ by \eqref{eq:r(p)} and
the integration
$$\sin_{r,q}^{-1}{y}=\int_0^y \frac{dt}{(1-t^q)^{1/r}}, \quad y \in [0,1),$$
with $1-t^q=1/(1+s^q)$ gives
\begin{align*}
\sin_{r,q}^{-1}{y}
=\int_0^{y/(1-y^q)^{1/q}} \frac{ds}{(1+s^q)^{1/p}}
=\sinh_{p,q}^{-1}{\left(\frac{y}{(1-y^q)^{1/q}}\right)}.
\end{align*}
Hence, we obtain
$$\sinh_{p,q}{x}=\frac{\sin_{r,q}{x}}{\cos_{r,q}^{r/q}{x}}, \quad
x \in \left[0, \frac{\pi_{r,q}}{2}\right).$$
Also, by \eqref{eq:p-q=1},
$$\cosh_{p,q}{x}=\frac{1}{\cos_{r,q}^{r/p}{x}},$$
and the proof is complete.
\end{proof}

\begin{proof}[Proof of Lemma \ref{lem:sintosinh}]
Let $p>q/(q+1)$.
Then, $r>q/(q+1)$ by \eqref{eq:r(p)}.
Lemma \ref{lem:sinhtosin} with $p$ replaced by $r$ and \eqref{eq:r=p} show
$$\sinh_{r,q}{x}=\frac{\sin_{p,q}{x}}{\cos_{p,q}^{p/q}{x}}, \quad
\cosh_{r,q}{x}=\frac{1}{\cos_{p,q}^{p/r}{x}}, \quad x \in \left[0,\frac{\pi_{p,q}}{2}\right).$$
Therefore,
$$\sin_{p,q}{x}=\frac{\sinh_{r,q}{x}}{\cosh_{r,q}^{r/q}{x}},\quad
\cos_{p,q}{x}=\frac{1}{\cosh_{r,q}^{r/p}{x}},$$
and the proof is complete.
\end{proof}

\begin{proof}[Proof of Theorem \ref{thm:mai}]
We prove \eqref{eq:mai}. 
Since $\sin_{p,q}{x}<x$ in $(0,\pi_{p,q}/2)$, the second inequality holds; 
hence we show the first inequality.

Let $f(x):=x-\sin_{p,q}{x}\cos_{p,q}^{-1/(q+1)}{x}$. Then,
$$f'(x)=1-\cos_{p,q}^{q/(q+1)}{x}-\frac{q}{p(q+1)}\sin_{p,q}^q{x}\cos_{p,q}^{q/(q+1)-p}{x},$$
$$f''(x)=-\frac{q^2(pq+p-q)}{p^2(q+1)^2}
\sin_{p,q}^{2q-1}{x}\cos_{p,q}^{(2q+1)/(q+1)-2p}{x}.$$
Since $pq+p-q>0$, we see $f''(x)<0$.
Therefore, $f'(x)<\lim_{x \to +0}f'(x)=0$.
Moreover, $f(x)<\lim_{x \to +0}f(x)=0$, which means \eqref{eq:mai}.

Next we show \eqref{eq:maih}.
Let $p>q/(q+1)$.
Then, $r>q/(q+1)$ by \eqref{eq:r(p)}.
From \eqref{eq:mai} with $p$ replaced by $r$,  
$$\cos_{r,q}^{1/(q+1)}{x}<\frac{\sin_{r,q}{x}}{x}<1.$$
By Lemma \ref{lem:sintosinh} with $p$ replaced by $r$ and \eqref{eq:r=p}, 
$$\left(\frac{1}{\cosh_{p,q}^{p/r}{x}}\right)^{1/(q+1)}
<\frac{\sinh_{p,q}{x}}{x \cosh_{p,q}^{p/q}{x}}<1.$$
Multiplying both sides by $\cosh_{p,q}^{p/q}{x}$, we conclude \eqref{eq:maih}.
\end{proof}

\begin{proof}[Proof of Theorem \ref{thm:maf1}]
The former half is the multiple-angle formulas of generalized trigonometric 
functions, which were proved in \cite{Takeuchi2016}. 

The latter half is shown as follows. 
By Lemma \ref{lem:sinhtosin} with $r(2q/(q+2))=2$ and the former half, 
$$\sinh_{2q/(q+2),q}{(2^{2/q}x)}
=\frac{\sin_{2,q}{(2^{2/q}x)}}{\cos_{2,q}^{2/q}{(2^{2/q}x)}}
=\frac{2^{2/q}\sin_{q^*,q}{x}\cos_{q^*,q}^{q^*-1}{x}}{(\cos_{q^*,q}^{q^*}{x}-\sin_{q^*,q}^q{x})^{2/q}}.$$
Lemma \ref{lem:sintosinh} with $r(q^*)=q/2$ shows 
that the right-hand side becomes
$$
\frac{2^{2/q}\sinh_{q/2,q}{x}}{(1-\sinh_{q/2,q}^q{x})^{2/q}}.$$
The formula of $\cosh_{2q/(q+2),q}$ immediately follows from  
\eqref{eq:p-q=1}.
\end{proof}

\begin{proof}[Proof of Theorem \ref{thm:maf2}]
The former half is shown as follows.
Let $y \in [0,\infty)$. Setting $t^q=((1+s^q)^{1/2}-1)/2$ in
$$\sinh_{q^*,q}^{-1}{y}=\int_0^y \frac{dt}{(1+t^q)^{1/q^*}},$$
we have
\begin{align*}
\sinh_{q^*,q}^{-1}{y}
&=\int_0^{y(4(1+y^q))^{1/q}}
\dfrac{\dfrac{2^{-1-1/q}s^{q-1}}{(1+s^q)^{1/2}((1+s^q)^{1/2}-1)^{1-1/q}}\,ds}
{2^{-1+1/q} ((1+s^q)^{1/2}+1)^{1-1/q}}\\
&=2^{-2/q}
\int_0^{y(4(1+y^q))^{1/q}} 
\frac{ds}{(1+s^q)^{1/2}};
\end{align*}
that is, 
\begin{equation}
\label{eq:pe}
\sinh_{q^*,q}^{-1}{y}=2^{-2/q} \sinh_{2,q}^{-1}{(y(4(1+y^q))^{1/q})}.
\end{equation}
Hence we obtain
$$\sinh_{2,q}{(2^{2/q} x)}=2^{2/q} \sinh_{q^*,q}{x}\cosh_{q^*,q}^{q^*-1}{x}.$$
In particular, letting $y \to \infty$ in \eqref{eq:pe} and using $r(q^*)=q/2,\
r(2)=2q/(q+2)$, 
we get
$$\frac{\pi_{q/2,q}}{2}=\frac{\pi_{2q/(q+2),q}}{2^{1+2/q}}.$$
The formula of $\cosh_{q^*,q}$ immediately follows from  
\eqref{eq:p-q=1}.

The latter half is proved as follows.
By Lemma \ref{lem:sintosinh} with $r(2q/(q+2))=2$ and the former half, 
$$\sin_{2q/(q+2),q}{(2^{2/q}x)}
=\frac{\sinh_{2,q}{(2^{2/q}x)}}{\cosh_{2,q}^{2/q}{(2^{2/q}x)}}
=\frac{2^{2/q}\sinh_{q^*,q}{x}\cosh_{q^*,q}^{q^*-1}{x}}{(\cosh_{q^*,q}^{q^*}{x}+\sinh_{q^*,q}^q{x})^{2/q}}.$$
Lemma \ref{lem:sinhtosin} with $r(q^*)=q/2$ shows
that the right-hand side becomes
$$
\frac{2^{2/q}\sin_{q/2,q}{x}}{(1+\sin_{q/2,q}^q{x})^{2/q}}.$$
The formula of $\cos_{2q/(q+2),q}$ immediately follows from  
\eqref{eq:p+q=1}.
\end{proof}


\section{Double-angle formulas}
\label{sec:3}

We first show the double-angle formula of $\sinh_{2,6}$. 
The proof is almost the same for $\sin_{2,6}$ in \cite{Takeuchipre}, 
but it is included here for the convenience of the reader. 
Next, based on this, we prove the double-angle formula of $\sin_{3/2,6}$
from that of $\sinh_{2,6}$ by Lemma \ref{lem:sintosinh}; 
the formula of $\sin_{3,6}$ (resp. $\sin_{6/5,3}$) 
from that of $\sin_{3/2,6}$ (resp. $\sin_{3/2,3}$) 
by Theorem \ref{thm:maf2}.

\begin{lem}
\label{lem:2,6}
For $x \in [0,\pi_{3/2,6}/4)$,
\begin{equation}
\label{eq:2,6}
\sinh_{2,6}(2x)=\frac{2\sinh_{2,6}x\cosh_{2,6}x}{\sqrt{1-8\sinh_{2,6}^6x}},
\end{equation}
$$\cosh_{2,6}(2x)=\frac{1+20\sinh_{2,6}^6x-8\sinh_{2,6}^{12}x}{(1-8\sinh_{2,6}^6x)^{3/2}}.$$
\end{lem}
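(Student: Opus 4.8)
The plan is to reduce the lemma to a single change of variables in $G_{2,6}(y)=\int_0^y(1+t^6)^{-1/2}\,dt$, mirroring the proof for $\sin_{2,6}$ in \cite{Takeuchipre}; once the right substitution is in hand both formulas drop out. I would introduce the candidate map
$$\Phi(y):=\frac{2y\sqrt{1+y^6}}{\sqrt{1-8y^6}},\qquad y\in[0,1/\sqrt{2}),$$
so that $1-8y^6>0$ on the whole interval and $\Phi(0)=0$. By logarithmic differentiation,
$$\frac{\Phi'(y)}{\Phi(y)}=\frac1y+\frac{3y^5}{1+y^6}+\frac{24y^5}{1-8y^6}.$$
The computational heart of the argument is the polynomial identity
$$(1-8y^6)^3+64\,y^6(1+y^6)^3=(1+20y^6-8y^{12})^2,$$
which yields $1+\Phi(y)^6=(1+20y^6-8y^{12})^2/(1-8y^6)^3$; since $1+20y^6-8y^{12}>0$ for $y^6\in[0,1/8]$, the positive square root is unambiguous. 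Feeding this into the previous display and simplifying — the numerators collapse once more to $1+20y^6-8y^{12}$ — gives the key relation
$$\frac{\Phi'(y)}{\sqrt{1+\Phi(y)^6}}=\frac{2}{\sqrt{1+y^6}},$$
which in particular shows $\Phi$ is $C^1$ and strictly increasing on $[0,1/\sqrt2)$ with $\Phi(y)\to\infty$ as $y\to(1/\sqrt2)^-$.

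Integrating this relation by the substitution $u=\Phi(t)$ gives $G_{2,6}(\Phi(y))=2\,G_{2,6}(y)$ for every $y\in[0,1/\sqrt2)$. Because $q=6$ makes $r(2)=2\cdot6/(6+2)=3/2$, letting $y\to(1/\sqrt2)^-$ forces $G_{2,6}(\Phi(y))\to\pi_{3/2,6}/2$, so $G_{2,6}$ carries $[0,1/\sqrt2)$ onto $[0,\pi_{3/2,6}/4)$; equivalently $\sinh_{2,6}$ carries $[0,\pi_{3/2,6}/4)$ onto $[0,1/\sqrt2)$, which is exactly what makes $\sqrt{1-8\sinh_{2,6}^6 x}$ meaningful in the statement. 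Then, fixing $x\in[0,\pi_{3/2,6}/4)$ and putting $y=\sinh_{2,6}x$ (so $G_{2,6}(y)=x$), the identity above reads $G_{2,6}(\Phi(y))=2x$, i.e.\ $\sinh_{2,6}(2x)=\Phi(\sinh_{2,6}x)$; rewriting $\sqrt{1+\sinh_{2,6}^6 x}=\cosh_{2,6}x$ via \eqref{eq:p-q=1} turns this into \eqref{eq:2,6}. Finally \eqref{eq:p-q=1} gives $\cosh_{2,6}^2(2x)=1+\sinh_{2,6}^6(2x)=1+\Phi(y)^6=(1+20y^6-8y^{12})^2/(1-8y^6)^3$, and taking the positive square root yields the second formula.

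I expect the only real work to be the algebra of the first paragraph: guessing the correct $\Phi$ and verifying the polynomial identity together with the cancellation that restores $1+20y^6-8y^{12}$ in the numerator of $\Phi'/\sqrt{1+\Phi^6}$. The rest — the change of variables, the endpoint bookkeeping through $r(2)=3/2$, and the passage from $\sinh_{2,6}$ to $\cosh_{2,6}$ via \eqref{eq:p-q=1} — is routine. One could instead avoid the substitution entirely by checking that both sides of \eqref{eq:2,6} solve the initial value problem $v'=2\sqrt{1+v^6}$, $v(0)=0$, and invoking uniqueness, but the integral computation is the one that parallels \cite{Takeuchipre}.
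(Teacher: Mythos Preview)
Your argument is correct, but it is not the paper's. The paper derives \eqref{eq:2,6} by expressing $\sinh_{2,6}$ through Jacobian elliptic functions: after the substitution $s=t^2$ and a further fractional-linear change to $\cn u$ (with modulus $k^2=(2+\sqrt3)/4$), one gets $\sinh_{2,6}x=\phi^{-1}(\cn(2\cdot 3^{1/4}x))$, and the classical double-angle formula for $\cn$ then yields the result after simplification. Your route is more elementary: you posit the answer $\Phi$ and verify the differential identity $\Phi'/\sqrt{1+\Phi^6}=2/\sqrt{1+y^6}$, which immediately integrates to $G_{2,6}\circ\Phi=2G_{2,6}$; the polynomial identity $(1-8u)^3+64u(1+u)^3=(1+20u-8u^2)^2$ in $u=y^6$ does all the work and the $\cosh$-formula drops out of \eqref{eq:p-q=1}. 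What you gain is a self-contained proof that avoids the theory of Jacobian elliptic functions entirely and makes the domain bookkeeping $\sinh_{2,6}(\pi_{3/2,6}/4)=1/\sqrt2$ transparent; what the paper's approach buys is an explanation of \emph{where} the formula comes from rather than a verification of a formula already in hand.
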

\begin{proof}
The change of variable $s=t^2$ in
$$\sinh_{2,6}^{-1}y=\int^y_0\frac{dt}{\sqrt{1+t^6}}, \quad y \in [0,\infty),$$
leads to the representation
$$\sinh_{2,6}^{-1}y=\frac{1}{2}\int^{y^2}_0\frac{ds}{\sqrt{s(1+s^3)}}.$$
The furthermore change of variable 
$$\cn u=\frac{1-(\sqrt{3}-1)s}{1+(\sqrt{3}+1)s},\quad k^2=\frac{2+\sqrt{3}}{4},$$
gives
\begin{align*}
\sinh_{2,6}^{-1}y&=\frac{1}{2}\int^{\cn^{-1}\phi(y)}_0\frac{((\sqrt{3}+1)\cn u+(\sqrt{3}-1))^2}{2\cdot3^{3/4}\sn u\dn u}\frac{2\sqrt{3}\sn u\dn u}{((\sqrt{3}+1)\cn u+(\sqrt{3}-1))^2}du\\
&=\frac{1}{2\cdot3^{1/4}}\cn^{-1}\phi(x),
\end{align*}
where $\sn{u}=\sn{(u,k)},\ \cn{u}=\cn{(u,k)}$ and $\dn{u}=\dn{(u,k)}$ are the
Jacobian elliptic functions, and
$$\phi(y)=\frac{1-(\sqrt{3}-1)y^2}{1+(\sqrt{3}+1)y^2}.$$
Thus, 
$$\sinh_{2,6}x=\phi^{-1}(\cn(2\cdot3^{1/4}x)),\quad
x \in \left[0, \frac{\pi_{3/2,6}}{2}\right),$$
and
$$\phi^{-1}(x)=\sqrt{\frac{1-x}{(\sqrt{3}-1)+(\sqrt{3}+1)x}}.$$
For $0\le x<\pi_{3/2,6}/4$, 
\begin{align*}
\sinh_{2,6}(2x)=&\phi^{-1}(\cn(2\tilde{x}))\\
=&\phi^{-1}\left(\frac{\cn^2\tilde{x}-\sn^2\tilde{x}\dn^2\tilde{x}}{1-k^2\sn^4\tilde{x}}\right)
\end{align*}
where $\tilde{x}:=2\cdot3^{1/3}x$. Recall that $\sn^2z=1-\cn^2z,\ 
\dn^2z=1-k^2\sn^2{z}$; then the last equality gives
\begin{align*}
\sinh_{2,6}(2x)
=\phi^{-1}\left(\frac{\phi(X)^2-(1-\phi(X)^2)(1-k^2(1-\phi(X)^2)}{1-k^2(1-\phi(X)^2)^2}\right),
\end{align*}
where $X:=\sinh_{2,6}x$.
With the observation 
\begin{gather*}
1-\phi(X)^2
=\frac{4\sqrt{3}X^2(1+X^2)}{(1+(\sqrt{3}+1)X^2)^2},\\
1-k^2(1-\phi(X)^2)
=\frac{1-X^2+X^4}{(1+(\sqrt{3}+1)X^2)^2},\\
1-k^2(1-\phi(X)^2)^2
=\frac{1+4(\sqrt{3}+1)X^2-8X^6+4(\sqrt{3}+1)X^8}{(1+(\sqrt{3}+1)X^2)^4},
\end{gather*}
implies that
\begin{align*}
\sinh_{2,6}(2x)=&\phi^{-1}\left(\frac{\phi(X)^2-(1-\phi(X)^2)(1-k^2(1-\phi(X)^2)}{1-k^2(1-\phi(X)^2)^2}\right)\\
=&\phi^{-1}\left(\frac{1-4(\sqrt{3}-1)X^2-8X^6-4(\sqrt{3}-1)X^8}{1+4(\sqrt{3}+1)X^2-8X^6+4(\sqrt{3}+1)X^8}\right).
\end{align*}
Routine simplification now results in the formula
\begin{equation*}
\sinh_{2,6}(2x)=\frac{2X\sqrt{1+X^6}}{\sqrt{1-8X^6}}
=\frac{2\sinh_{2,6}x\cosh_{2,6}x}{\sqrt{1-8\sinh_{2,6}^6x}}.
\end{equation*}
The formula of $\cosh_{2,6}$ immediately follows from  
\eqref{eq:p-q=1}.
\end{proof}

\begin{rem}
Since the left-hand side of \eqref{eq:2,6}
diverges to $\infty$ as $x \to \pi_{3/2,6}/4-0$,
we obtain 
$$\sinh_{2,6}{\frac{\pi_{3/2,6}}{4}}=\frac{1}{\sqrt{2}}.$$ 
\end{rem}

\begin{thm}\label{lem:3/2,6}
For $x\in [0,\pi_{3/2,6}/4)$,
\begin{equation}
\label{eq:3/2,6}
\sin_{3/2,6}(2x)=\frac{2\sin_{3/2,6}x}{\left(1+18\sin_{3/2,6}^6x-27\sin_{3/2,6}^{12}x\right)^{1/3}}.
\end{equation}
\end{thm}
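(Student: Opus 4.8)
The plan is to derive \eqref{eq:3/2,6} from the double-angle formula for $\sinh_{2,6}$ proved in Lemma \ref{lem:2,6}, using Lemma \ref{lem:sintosinh} as the bridge between the two worlds. First I would observe that for $q=6$ the parameter $r$ attached to $(p,q)=(3/2,6)$ is $r(3/2)=\dfrac{(3/2)\cdot 6}{(3/2)\cdot 6+3/2-6}=2$, so Lemma \ref{lem:sintosinh} gives, for $x\in[0,\pi_{3/2,6}/2)$,
$$\sin_{3/2,6}x=\frac{\sinh_{2,6}x}{\cosh_{2,6}^{1/3}x},\qquad \cos_{3/2,6}x=\frac{1}{\cosh_{2,6}^{4/3}x}.$$
Applying the first identity with $x$ replaced by $2x$ — legitimate exactly on the interval $[0,\pi_{3/2,6}/4)$ in the statement — and then substituting the expressions for $\sinh_{2,6}(2x)$ and $\cosh_{2,6}(2x)$ from Lemma \ref{lem:2,6}, I expect the factor $(1-8\sinh_{2,6}^6 x)^{1/2}$ to cancel, leaving
$$\sin_{3/2,6}(2x)=\frac{2\sinh_{2,6}x\,\cosh_{2,6}x}{\bigl(1+20\sinh_{2,6}^6 x-8\sinh_{2,6}^{12}x\bigr)^{1/3}}.$$

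Next I would convert the right-hand side into a function of $u:=\sin_{3/2,6}x$ alone. Writing $S:=\sinh_{2,6}x$ and $C:=\cosh_{2,6}x$, the identity \eqref{eq:p-q=1} with $(p,q)=(2,6)$ gives $C^2=1+S^6$, and combining this with $u=S/C^{1/3}$ gives $u^6=S^6/(1+S^6)$, hence $S^6=u^6/(1-u^6)$ and $C^2=1/(1-u^6)$. From these, $2SC=2u\,C^{4/3}=2u(1-u^6)^{-2/3}$, and substituting $S^6=u^6/(1-u^6)$ into $1+20S^6-8S^{12}$ and clearing denominators should give $\bigl(1+18u^6-27u^{12}\bigr)/(1-u^6)^2$. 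The powers of $1-u^6$ then cancel after taking cube roots, producing exactly \eqref{eq:3/2,6}.

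I expect the only non-mechanical step to be the polynomial identity $(1-u^6)^2+20u^6(1-u^6)-8u^{12}=1+18u^6-27u^{12}$, which is a one-line expansion; everything else is a direct assembly of Lemmas \ref{lem:2,6} and \ref{lem:sintosinh} together with the relation $\cosh_{2,6}^2-\sinh_{2,6}^6=1$. As a consistency check I would let $x\to\pi_{3/2,6}/4-0$: the left-hand side tends to $\sin_{3/2,6}(\pi_{3/2,6}/2-0)=1$, while $\sinh_{2,6}(\pi_{3/2,6}/4)=1/\sqrt2$ from the Remark following Lemma \ref{lem:2,6} forces $u^6\to 1/9$, and $2u/(1+18u^6-27u^{12})^{1/3}$ indeed equals $1$ there; this also confirms $1+18u^6-27u^{12}>0$ throughout $[0,\pi_{3/2,6}/4)$, so the cube root is well defined.
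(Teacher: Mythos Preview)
Your proof is correct and follows essentially the same route as the paper: use Lemma~\ref{lem:sintosinh} with $(p,q)=(3/2,6)$ to write $\sin_{3/2,6}(2x)=\sinh_{2,6}(2x)/\cosh_{2,6}^{1/3}(2x)$, substitute Lemma~\ref{lem:2,6}, and then convert back to $\sin_{3/2,6}x$. The only cosmetic difference is that the paper cites Lemma~\ref{lem:sinhtosin} (with $p=2$, $q=6$) to rewrite $S$ and $C$ in terms of $\sin_{3/2,6}x$ and $\cos_{3/2,6}x$ and then applies \eqref{eq:p+q=1}, whereas you invert $u=S/C^{1/3}$ directly via $C^2=1+S^6$; both lead to the same polynomial identity $(1-u^6)^2+20u^6(1-u^6)-8u^{12}=1+18u^6-27u^{12}$.
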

\begin{proof}
By Lemma \ref{lem:sintosinh} with $p=3/2$ and $q=6$,
for $x \in [0,\pi_{3/2,6}/4)$,
$$\sin_{3/2,6}{(2x)}=\frac{\sinh_{2,6}{(2x)}}{\cosh_{2,6}^{1/3}{(2x)}}.$$
Applying Lemma \ref{lem:2,6} to the right-hand side gives
$$\sin_{3/2,6}{(2x)}=\frac{2\sinh_{2,6}{x}\cosh_{2,6}{x}}{(1+20\sinh_{2,6}^6{x}-8\sinh_{2,6}^{12}{x})^{1/3}}.$$
Lemma \ref{lem:sinhtosin} with $p=2$ and $q=6$ yields
$$\sin_{3/2,6}{(2x)}=\frac{2\sin_{3/2,6}{x}}{(\cos_{3/2,6}^3{x}+20\sin_{3/2,6}^6{x}
\cos_{3/2,6}^{3/2}{x}-8\sin_{3/2,6}^{12}{x})^{1/3}}.$$
The conclusion follows from \eqref{eq:p+q=1}.
\end{proof}

\begin{rem}
Since the left-hand side of \eqref{eq:3/2,6}
converges to $1$ as $x \to \pi_{3/2,6}/4-0$,
we obtain 
$$\sin_{3/2,6}{\frac{\pi_{3/2,6}}{4}}=\frac{1}{3^{1/3}}.$$ 
\end{rem}

\begin{rem}
\label{rem:th}
As the proof of Theorem \ref{lem:3/2,6},
we can also obtain 
a formula of $\sinh_{p,q}$ from those of $\sin_{r,q}$ and $\cos_{r,q}$.
For instance, combining Lemmas \ref{lem:sinhtosin}, \ref{lem:sintosinh}
and the formula of $\sin_{4/3,4}$ due to Edmunds et al. \cite{Edmunds2012}:
$$\sin_{4/3,4}{(2x)}=\frac{2\sin_{4/3,4}{x}\cos_{4/3,4}^{1/3}{x}}
{\sqrt{1+4\sin_{4/3,4}^4{x}\cos_{4/3,4}^{4/3}{x}}},$$
we have
$$\sinh_{2,4}{(2x)}=\frac{2\sinh_{2,4}x \cosh_{2,4}x}{1-\sinh_{2,4}^4x},$$
which is very similar to the double-angle formula of lemniscate function: 
$$\sin_{2,4}{(2x)}=\frac{2\sin_{2,4}x \cos_{2,4}x}{1+\sin_{2,4}^4x}.$$
Thus, formulas for generalized hyperbolic
functions can be produced from those for generalized trigonometric
functions, and vice versa.
\end{rem}

\begin{thm}\label{lem:3,6}
For $x \in [0,\pi_{3,6}/4)$,
\begin{equation}
\label{eq:3,6}
\sin_{3,6}(2x)=\frac{2^{5/3}s(1+s^6)}{\left((1-s^3)(1+6s^3+s^6)^{3/2}+(1+s^3)(1-6s^3+s^6)^{3/2}\right)^{2/3}},
\end{equation}
where $s:=\sin_{3,6}x$.
\end{thm}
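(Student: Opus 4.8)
The plan is to deduce \eqref{eq:3,6} from the double-angle formula for $\sin_{3/2,6}$ (Theorem \ref{lem:3/2,6}) by exploiting the link between $\sin_{3/2,6}$ and $\sin_{3,6}$ furnished by Theorem \ref{thm:maf2}. Specializing Theorem \ref{thm:maf2} to $q=6$ (so that $2q/(q+2)=3/2$, $q/2=3$, $2^{2/q}=2^{1/3}$) gives, for $x\in[0,\pi_{3,6}/2)$,
$$\sin_{3/2,6}(2^{1/3}x)=\frac{2^{1/3}\sin_{3,6}x}{(1+\sin_{3,6}^{6}x)^{1/3}}.$$
Write $s:=\sin_{3,6}x$ and $S:=\sin_{3,6}(2x)$, and restrict to $x\in[0,\pi_{3,6}/4)$, so that $2x$ still lies in the range above and $2^{1/3}x$ lies in the domain $[0,\pi_{3/2,6}/4)$ of Theorem \ref{lem:3/2,6}. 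Applying the displayed identity at $x$ and at $2x$, and substituting the former into the double-angle formula of $\sin_{3/2,6}$ evaluated at $2^{1/3}x$, one clears denominators (cubing throughout) to arrive at a single relation between $S$ and $s$:
$$\frac{S^{3}}{1+S^{6}}=\frac{8s^{3}(1+s^{6})^{3}}{P(s)},\qquad P(s):=(1+s^{6})^{4}+72s^{6}(1+s^{6})^{2}-432s^{12}=1+76s^{6}-282s^{12}+76s^{18}+s^{24}.$$

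The next step is to solve this for $S^{3}$. It is a quadratic $AS^{6}-S^{3}+A=0$ with $A:=8s^{3}(1+s^{6})^{3}/P(s)$, whose two roots are reciprocal; since $0\le S<1$ on the interval in question we take the smaller root $S^{3}=(1-\sqrt{1-4A^{2}})/(2A)$. The crux is to simplify the radical. Put $v:=s^{3}$ and $Q:=16v(1+v^{2})^{3}$, so $P:=P(s)=1+76v^{2}-282v^{4}+76v^{6}+v^{8}$ and $4A^{2}=Q^{2}/P^{2}$. Both $P+Q$ and $P-Q$ are palindromic of degree $8$ in $v$ ($P$ even, $Q$ odd), and the substitution $z=v+v^{-1}$ turns $P+Q$ into $v^{4}(z^{4}+16z^{3}+72z^{2}-432)=v^{4}(z-2)(z+6)^{3}$; translating back gives the polynomial identities
$$P+Q=(1-v)^{2}(1+6v+v^{2})^{3},\qquad P-Q=(1+v)^{2}(1-6v+v^{2})^{3}$$
(the second by $v\mapsto-v$). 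On $[0,\pi_{3,6}/4)$ one checks $0\le v<3-2\sqrt2$ — for instance from $\sin_{3,6}^{3}(\pi_{3,6}/4)=3-2\sqrt2$, which follows from the displayed identity together with the fact that $\sin_{3/2,6}(2x)\to1$ as $x\to\pi_{3/2,6}/4-0$ — so that $1-v^{2}$, $1\pm6v+v^{2}$ and $P$ are all positive there. Setting $a:=(1-v)(1+6v+v^{2})^{3/2}$ and $b:=(1+v)(1-6v+v^{2})^{3/2}$, we have $P=(a^{2}+b^{2})/2$ and $\sqrt{(P+Q)(P-Q)}=ab$, hence
$$S^{3}=\frac{P-\sqrt{(P+Q)(P-Q)}}{16v(1+v^{2})^{3}}=\frac{P-ab}{16v(1+v^{2})^{3}}=\frac{(a-b)^{2}}{32\,v(1+v^{2})^{3}}.$$

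Finally, subtracting the two displayed identities gives $a^{2}-b^{2}=(P+Q)-(P-Q)=2Q=32\,v(1+v^{2})^{3}$, so $(a-b)^{2}=(a^{2}-b^{2})^{2}/(a+b)^{2}=(32v(1+v^{2})^{3})^{2}/(a+b)^{2}$ and therefore $S^{3}=32\,v(1+v^{2})^{3}/(a+b)^{2}$. Taking cube roots and substituting $v=s^{3}=\sin_{3,6}^{3}x$ (so $32^{1/3}=2^{5/3}$, $v^{1/3}=s$, $1+v^{2}=1+s^{6}$) yields exactly \eqref{eq:3,6}. I expect the main obstacle to be the polynomial bookkeeping of the middle paragraph — spotting the palindromic reduction and the factorization $z^{4}+16z^{3}+72z^{2}-432=(z-2)(z+6)^{3}$, and verifying the sign and domain conditions that make the half-integer powers legitimate; everything after that is routine. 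As a cross-check one may instead verify directly that $T:=32v(1+v^{2})^{3}/(a+b)^{2}$ satisfies $T/(1+T^{2})=A$; after clearing denominators this reduces to the two identities $a^{2}+b^{2}=2P$ and $a^{2}-b^{2}=32v(1+v^{2})^{3}$.
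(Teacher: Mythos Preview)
Your proof is correct and follows essentially the same route as the paper: link $\sin_{3,6}$ to $\sin_{3/2,6}$ via Theorem~\ref{thm:maf2} with $q=6$, invoke the double-angle formula of Theorem~\ref{lem:3/2,6}, solve the resulting quadratic for $\sin_{3,6}^3(2x)$, and collapse the surd via a sum-of-square-roots identity together with a polynomial factorization. The only cosmetic difference is the order of substitution --- the paper keeps the intermediate $S=\sin_{3/2,6}(2^{1/3}x)$ and factors $1\pm8S^3+18S^6-27S^{12}=(1\mp S^3)(1\pm3S^3)^3$ before passing to $s$ via $S^3=2s^3/(1+s^6)$, whereas you substitute first and factor the resulting palindromic octic in $v=s^3$ through $z=v+v^{-1}$; the two factorizations correspond under this change of variable.
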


\begin{proof}
Let $x \in (0,\pi_{3,6}/4)$, since the theorem is trivial if $x=0$.  
Applying Theorem \ref{thm:maf2} for $q=6$
with $x$ replaced by $2x \in (0,\pi_{3/2,6}/2^{4/3})=(0,\pi_{3,6}/2)$, we get
$$\sin_{3/2,6}(2 \cdot 2^{1/3}x)=\frac{2^{1/3}\sin_{3,6}{(2x)}}{\left(1+\sin_{3,6}^6{(2x)}\right)^{1/3}};$$
that is,
$$\sin_{3/2,6}^3(2\cdot 2^{1/3}x)\sin_{3,6}^6{(2x)}-2\sin_{3,6}^3{(2x)}+\sin_{3/2,6}^3(2 \cdot 2^{1/3}x)=0.$$
Therefore, since $0<\sin_{3,6}^3{(2x)}\sin_{3/2,6}^3(2\cdot 2^{1/3}x)\le1$,
$$\sin_{3,6}^3{(2x)}=\frac{1-\sqrt{1-\sin_{3/2,6}^6(2 \cdot 2^{1/3}x)}}{\sin_{3/2,6}^3(2 \cdot 2^{1/3}x)}.$$
Set $S=S(x):=\sin_{3/2,6}(2^{1/3}x)$. Using Theorem \ref{lem:3/2,6} for $\sin_{3/2,6}{x}$, we have
\begin{align*}
\sin_{3,6}^3(2x)
&=\frac{1-\sqrt{1-64S^6/(1+18S^6-27S^{12})^2}}{8S^3/(1+18S^6-27S^{12})}.
\end{align*}
For $a>b>0$, the identity
$$\frac{1-\sqrt{1-b^2/a^2}}{b/a}=\frac{2b}{(\sqrt{a+b}+\sqrt{a-b})^2}$$
holds; hence
\begin{align*}
\sin_{3,6}^3(2x)
&=\frac{16S^3}{(\sqrt{1+8S^3+18S^6-27S^{12}}+\sqrt{1-8S^3+18S^6-27S^{12}})^2}\\
&=\frac{16S^3}{(\sqrt{(1-S^3)(1+3S^3)^3}+\sqrt{(1+S^3)(1-3S^3)^3})^2}.
\end{align*}
Here, by Theorem \ref{thm:maf2} with $q=6$, $s=s(x):=\sin_{3,6}x$ satisfies
$$S^3=\frac{2s^3}{1+s^6}.$$
Thus, 
\begin{align*}
\sin_{3,6}^3(2x)
&=\frac{32s^3(1+s^6)^{3}}{(\sqrt{(1-s^3)^2(1+6s^3+s^6)^3}+\sqrt{(1+s^3)^2(1-6s^3+s^6)^3})^2},
\end{align*}
which establishes the formula.
\end{proof}

\begin{rem}
Since the left-hand side of \eqref{eq:3,6}
converges to $1$ as $x \to \pi_{3,6}/4-0$,
we obtain 
$$\sin_{3,6}{\frac{\pi_{3,6}}{4}}=(3-2\sqrt{2})^{1/3}.$$ 
\end{rem}

\begin{thm}\label{lem:6/5,3}
For $x\in[0,\pi_{6/5,3}/4)$,
\begin{equation}
\label{eq:6/5,3}
\sin_{6/5,3}(2x)=\frac{4\cos_{6/5,3}^{1/5}x(1+3\cos_{6/5,3}^{3/5}x)(1-\cos_{6/5,3}^{3/5}x)^{1/3}}{(1+24\cos_{6/5,3}^{3/5}x+18\cos_{6/5,3}^{6/5}x-27\cos_{6/5,3}^{12/5}x)^{2/3}}.
\end{equation}
\end{thm}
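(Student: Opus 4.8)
The plan is to carry out a duality argument in the spirit of the proofs of Theorems \ref{lem:3/2,6} and \ref{lem:3,6}, with Theorem \ref{thm:maf2} for $q=3$ serving as the bridge between $\sin_{6/5,3}$ and $\sin_{3/2,3}$, and with the classical duplication formula for $\sin_{3/2,3}$ --- that is, Dixon's duplication formula, since $\operatorname{sm}:=\sin_{3/2,3}$ and $\operatorname{cm}:=\cos_{3/2,3}^{1/2}$ satisfy $\operatorname{sm}^{3}+\operatorname{cm}^{3}=1$ by \eqref{eq:p+q=1} --- as the second ingredient. We may assume $x>0$, and we set $x=2^{2/3}z$, so that $z=2^{-2/3}x$ and $2x=2^{2/3}(2z)$.

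The $\cos$-identity of Theorem \ref{thm:maf2} with $q=3$, read at $z$, gives
$$\cos_{6/5,3}(x)=\cos_{6/5,3}(2^{2/3}z)=\left(\frac{1-\sin_{3/2,3}^{3}z}{1+\sin_{3/2,3}^{3}z}\right)^{5/3},$$
which we invert to $\sin_{3/2,3}^{3}z=(1-c)/(1+c)$ with $c:=\cos_{6/5,3}^{3/5}x$; hence $\cos_{3/2,3}^{3/2}z=1-\sin_{3/2,3}^{3}z=2c/(1+c)$ by \eqref{eq:p+q=1}. The $\sin$-identity of the same theorem, read at $2z$, gives
$$\sin_{6/5,3}(2x)=\frac{2^{2/3}\sin_{3/2,3}(2z)}{\left(1+\sin_{3/2,3}^{3}(2z)\right)^{2/3}}.$$
It then remains to (i) apply Dixon's duplication formula to express $\sin_{3/2,3}(2z)$ and $\sin_{3/2,3}^{3}(2z)$ as explicit expressions in $\sin_{3/2,3}z$ and $\cos_{3/2,3}z$; (ii) eliminate $\cos_{3/2,3}z$ using $\cos_{3/2,3}^{3/2}z=1-\sin_{3/2,3}^{3}z$; (iii) substitute $\sin_{3/2,3}^{3}z=(1-c)/(1+c)$ everywhere and simplify, reading the powers of $c$ back through $c^{1/3}=\cos_{6/5,3}^{1/5}x$, $c^{2}=\cos_{6/5,3}^{6/5}x$, $c^{4}=\cos_{6/5,3}^{12/5}x$ to recover \eqref{eq:6/5,3}.

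For the range: Theorem \ref{thm:maf2} with $q=3$ identifies $\pi_{3/2,3}/2=\pi_{6/5,3}/2^{5/3}$, i.e. $\pi_{3/2,3}=\pi_{6/5,3}/2^{2/3}$, so the requirement that $2z$ stay in the principal interval $[0,\pi_{3/2,3}/2)$ of $\sin_{3/2,3}$ --- equivalently $z\in[0,\pi_{3/2,3}/4)$ --- is exactly $x\in[0,\pi_{6/5,3}/4)$, the asserted interval; there $\sin_{3/2,3}$ and $\cos_{3/2,3}$ are nonnegative, so every radical (including any square root introduced by the duplication law, as well as the recovery of $\sin_{3/2,3}^{3}z$ from $c$) takes its principal branch. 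The only genuine obstacle I foresee is step (iii): clearing the cube roots from the $(1+\sin_{3/2,3}^{3}(2z))^{2/3}$ denominator and recognizing the resulting quartic in $c$ as $1+24c+18c^{2}-27c^{4}$, the analogue here of the sum-of-two-square-roots denominator that appeared in Theorem \ref{lem:3,6}. As a final consistency check, letting $x\to\pi_{6/5,3}/4-0$ should force the left-hand side to $1$ and thereby pin down $\sin_{6/5,3}(\pi_{6/5,3}/4)$, paralleling the remarks after Theorems \ref{lem:3/2,6} and \ref{lem:3,6}.
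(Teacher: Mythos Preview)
Your proposal is correct and follows essentially the same route as the paper: both set $x=2^{2/3}y$ (your $z$ is the paper's $y$), apply Theorem \ref{thm:maf2} with $q=3$ at $2y$ to reduce $\sin_{6/5,3}(2x)$ to $\sin_{3/2,3}(2y)$, invoke Dixon's duplication formula (Lemma \ref{lem:dixon}), and then use the inversion $\sin_{3/2,3}^{3}y=(1-c)/(1+c)$ with $c=\cos_{6/5,3}^{3/5}x$ to rewrite everything in terms of $c$. The simplification you flag as a possible obstacle is in fact routine: from $\sin_{3/2,3}(2y)=(1+3c)(1-c)^{1/3}/(2^{4/3}c^{1/3})$ one cubes and adds $1$ over the common denominator $16c$, and the numerator $16c+(1+3c)^{3}(1-c)$ expands directly to $1+24c+18c^{2}-27c^{4}$, yielding \eqref{eq:6/5,3} after collecting powers of $2$.
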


\begin{proof}
From Theorem \ref{thm:maf2} with $q=3$, 
for $x \in [0,\pi_{6/5,3}/2^{5/3})=[0,\pi_{3/2,3}/2)$,
$$\sin_{6/5,3}(2^{2/3}x)=\frac{2^{2/3}\sin_{3/2,3}x}{\left(1+\sin_{3/2,3}^3x\right)^{{2/3}}}.$$
In a similar way to the proof of Theorem \ref{lem:3,6}, we have
\begin{equation}
\label{eq:trans}
\sin_{3/2,3}^{3/2}x
=\sqrt{\frac{1-\cos_{6/5,3}^{3/5}(2^{2/3}x)}{1+\cos_{6/5,3}^{3/5}(2^{2/3}x)}}.
\end{equation}

Now, let $x \in [0,\pi_{6/5,3}/4)$ and $y:=x/(2^{2/3})$.
It follows from Theorem \ref{thm:maf2} with $q=3$ that
since $2y\in[0,\pi_{6/5,3}/2^{5/3})=[0,\pi_{3/2,3}/2)$, we get
\begin{equation}
\label{eq:conc}
\sin_{6/5,3}(2x)=\sin_{6/5,3}(2^{2/3}\cdot2y)
=\frac{2^{2/3}\sin_{3/2,3}(2y)}{\left(1+\sin_{3/2,3}^3(2y)\right)^{2/3}}.
\end{equation}
Here, Dixon's formula (Lemma \ref{lem:dixon}) and \eqref{eq:trans} yield
\begin{align*}
\sin_{3/2,3}(2y)
&=\frac{\sin_{3/2,3}y\ (2-\sin_{3/2,3}^3y)}{(1-\sin_{3/2,3}^3y)^{1/3}(1+\sin_{3/2,3}^3y)}\\
&=\frac{(1+3c^{3/5})(1-c^{3/5})^{1/3}}{2^{4/3}c^{1/5}},
\end{align*}
where $c:=\cos_{6/5,3}(2^{2/3}y)=\cos_{6/5,3}{x}$.
Therefore, from \eqref{eq:conc} we have
\begin{align*}
\sin_{6/5,3}(2x)
&=\frac{4c^{1/5}(1+3c^{3/5})(1-c^{3/5})^{1/3}}{(1+24c^{3/5}+18c^{6/5}-27c^{12/5})^{2/3}}.
\end{align*}
The proof is completed.
\end{proof}

\begin{rem}
Since the left-hand side of \eqref{eq:6/5,3}
converges to $1$ as $x \to \pi_{6/5,3}/4-0$,
we obtain 
$$\cos_{6/5,3}{\frac{\pi_{6/5,3}}{4}}=\frac{1}{3^{5/3}}.$$ 
\end{rem}

No double-angle formula for $\sin_{3/2,2}$ has been resolved so far.
Finally in this section, we prove the previously unresolved 
double-angle formula of $\sin_{3/2,2}$, though  
this formula is not an application of Lemmas \ref{lem:sinhtosin} and 
\ref{lem:sintosinh}. 
Here are two lemmas needed to prove the formula.

\begin{lem}[Dixon \cite{Dixon1890}]
\label{lem:dixon}
Let $u+v,\ u,\ v\in[0,\pi_{3/2,3}/2)$ with $u \neq v$. Then,
\begin{align*}
\sin_{3/2,3}(u+v)
&=\frac{\sin_{3/2,3}^2u\cos^{1/2}_{3/2,3}v-\cos^{1/2}_{3/2,3}u\sin_{3/2,3}^2v}{\sin_{3/2,3}u\cos_{3/2,3}v-\cos_{3/2,3}u\sin_{3/2,3}v},\\
\cos^{1/2}_{3/2,3}(u+v)&=\frac{\sin_{3/2,3}u\cos^{1/2}_{3/2,3}u-\cos^{1/2}_{3/2,3}v\sin_{3/2,3}v}{\sin_{3/2,3}u\cos_{3/2,3}v-\cos_{3/2,3}u\sin_{3/2,3}v}.
\end{align*}
Moreover, if $u=v$, then for $u \in [0,\pi_{3/2,3}/4)$,
\begin{align*}
\sin_{3/2,3}(2u)
&=\frac{\sin_{3/2,3}u (1+\cos_{3/2,3}^{3/2}u)}{\cos_{3/2,3}^{1/2}u (1+\sin_{3/2,3}^3u)},\\
\cos_{3/2,3}^{1/2}(2u)
&=\frac{\cos_{3/2,3}^{3/2}u-\sin_{3/2,3}^3u}{\cos_{3/2,3}^{3/2}u (1+\sin_{3/2,3}^3u)}.
\end{align*}
\end{lem}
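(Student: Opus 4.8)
The plan is to obtain the double-angle formula for $\sin_{3/2,3}$ by the classical substitution that reduces the defining integral of $\sin_{3/2,3}^{-1}$ to an integral for the Dixon elliptic function $\slem$, and then to translate Dixon's addition/duplication formula for $\slem$ back into the language of $\sin_{3/2,3}$. Concretely, starting from
$$\sin_{3/2,3}^{-1}{y}=\int_0^y \frac{dt}{(1-t^3)^{2/3}},$$
I would look for a birational change of variable $t=\psi(w)$ turning the right-hand side into a constant multiple of the integral $\int_0^{w} (1-\tau^3)^{-2/3}\,d\tau$ that defines $\slem^{-1}$ (or directly into Weierstrass form via $t^3=$ a Möbius function of $\wp$, exactly as in the $\sinh_{2,6}$ computation above with $\cn$). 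This identifies $\sin_{3/2,3}{x}$ with $\slem{(cx)}$ for an explicit constant $c$, and $\cos_{3/2,3}^{1/2}{x}$ with the associated cosine-type Dixon function, using \eqref{eq:p+q=1} in the form $\cos_{3/2,3}^{3/2}{x}+\sin_{3/2,3}^3{x}=1$ to pin down the second function.

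Next I would invoke Dixon's addition theorem for $\slem$ and $\clem$ — the genuinely classical input, which I am free to cite as Dixon's 1890 paper — in the two-variable form, and specialize it to $u=v$. Transported through the identification of the previous step, this yields exactly the stated two identities: the general addition formulas for $\sin_{3/2,3}(u+v)$ and $\cos_{3/2,3}^{1/2}(u+v)$ on the range $u+v,u,v\in[0,\pi_{3/2,3}/2)$ with $u\neq v$ (the restriction $u\neq v$ coming from the denominator $\sin_{3/2,3}u\cos_{3/2,3}v-\cos_{3/2,3}u\sin_{3/2,3}v$ vanishing there), and then the $u=v$ duplication formulas after clearing that removable singularity by a limiting argument or by directly differentiating the addition law. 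For the duplication formulas the range must shrink to $u\in[0,\pi_{3/2,3}/4)$ so that $2u$ stays in $[0,\pi_{3/2,3}/2)$; I would check that $\cos_{3/2,3}^{3/2}u-\sin_{3/2,3}^3u>0$ there, which is immediate since $\sin_{3/2,3}$ is increasing, $\cos_{3/2,3}$ decreasing, and the two are equal precisely at $\pi_{3/2,3}/4$ by \eqref{eq:p+q=1}.

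The main obstacle I expect is pinning down the correct change of variables in the first step and verifying that the constant $c$ works out so that the endpoint $y\to1$ corresponds to the quarter-period; this is the computational heart, entirely parallel to the $\cn$-substitution carried out for Lemma \ref{lem:2,6}, but one must track the cube roots and the Möbius coefficients carefully. A cleaner alternative that avoids reproving Dixon's theorem from scratch is simply to \emph{cite} Dixon \cite{Dixon1890} for the addition theorem of $\slem,\clem$ in their original normalization, record the (linear) dictionary $\sin_{3/2,3}{x}=\slem(\pi_{3/2,3}x/(2\varpi))$ for the appropriate Dixon quarter-period $\varpi$, and then the two displayed addition identities and their $u=v$ specializations follow by substitution and routine algebraic simplification, with the only real checks being the domain of validity and the sign of the denominators.
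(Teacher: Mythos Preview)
Your approach is correct and matches the paper's, which simply refers the reader to Dixon \cite{Dixon1890} for the addition theorem of his elliptic functions. Note, however, that the ``main obstacle'' you anticipate is vacuous: since $F_{3/2,3}(y)=\int_0^y(1-t^3)^{-2/3}\,dt$ is already Dixon's defining integral for $\operatorname{sm}$ (the case $\alpha=0$ in \cite{Dixon1890}), one has $\sin_{3/2,3}=\operatorname{sm}$ and $\cos_{3/2,3}^{1/2}=\operatorname{cm}$ on the nose, with no change of variable and constant $c=1$, so the identities are just a transcription of Dixon's formulas.
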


\begin{proof}
The proof is based on the addition theorem of Dixon's elliptic functions. 
See Dixon \cite{Dixon1890}. 
\end{proof}

\begin{lem}[Cox-Shurman \cite{Cox2005}, Sato-Takeuchi \cite{Sato-Takeuchi2020}]
\label{lem:cox-shurman}
Let $x+y,\ x,\ y\in[0,\pi_{2,3})$ with $x \neq y$. Then,
\begin{equation}
s_{x+y}=\frac{2(s_x-s_y)((1-c_x)(1+c_y)-s_xs_y^2)}{(1-c_x)(1+c_y)^2-2s_x^2s_y(1+c_y)+s_xs_y^2(1+c_x)},
\label{eq:sx+y}
\end{equation}
where $s_z:=\sin_{2,3}{z}$ and $c_z:=\cos_{2,3}{z}$.
Moreover, if $x=y$, then for $x \in [0,\pi_{2,3}/2)$,
$$s_{2x}=\frac{4s_xc_x(3+c_x)^3}{(1+c_x)(8+s_x^3)^2}.$$
\end{lem}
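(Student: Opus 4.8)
The plan is to deduce \eqref{eq:sx+y} from results already established above; the formula itself is due to Cox-Shurman \cite{Cox2005}, with a second proof by Sato-Takeuchi \cite{Sato-Takeuchi2020}. The key observation is that, after the rescaling $x=2^{2/3}\xi$ furnished by Theorem \ref{thm:maf1} with $q=3$, the pair $(\sin_{2,3}x,\cos_{2,3}x)$ becomes a polynomial expression in Dixon's elliptic functions $\mathrm{sm},\mathrm{cm}$, so that \eqref{eq:sx+y} is just Dixon's addition theorem (Lemma \ref{lem:dixon}) transcribed into the variables $s_z,c_z$. Conceptually this must hold, since $x\mapsto(s_x,c_x)$ parametrizes the elliptic curve $c^2+s^3=1$ by a constant multiple of its holomorphic differential, which forces $s_{x+y}$ to be a rational function of $(s_x,c_x,s_y,c_y)$.

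First I would record the dictionary. By Theorem \ref{thm:maf1} with $q=3$ (so that $q^*=3/2$), set $a:=\sin_{3/2,3}\xi$ and $b:=\cos_{3/2,3}^{1/2}\xi$, so that $a^3+b^3=1$ by \eqref{eq:p+q=1}; thus $(a,b)$ is Dixon's pair $(\mathrm{sm},\mathrm{cm})$ at $\xi$. For $x=2^{2/3}\xi$ one then has
$$
s_x=\sin_{2,3}x=2^{2/3}ab,\qquad c_x=\cos_{2,3}x=b^3-a^3,
$$
whence $1-c_x=2a^3$, $1+c_x=2b^3$, and $s_x^3=4a^3b^3=(1-c_x)(1+c_x)=1-c_x^2$ (which reproves \eqref{eq:p+q=1} for $(p,q)=(2,3)$); in particular $a^3$, $b^3$, and $ab$ are rational in $(s_x,c_x)$.

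Next, for the addition formula I would put $x=2^{2/3}\xi$, $y=2^{2/3}\eta$ and let $(a,b)$, $(a',b')$ be Dixon's pairs at $\xi$, $\eta$. Lemma \ref{lem:dixon} gives $\sin_{3/2,3}(\xi+\eta)=(a^2b'-a'^2b)/(ab'^2-a'b^2)$ and $\cos_{3/2,3}^{1/2}(\xi+\eta)=(ab-a'b')/(ab'^2-a'b^2)$, hence, by the dictionary,
$$
s_{x+y}=2^{2/3}\,\frac{(a^2b'-a'^2b)(ab-a'b')}{(ab'^2-a'b^2)^2}.
$$
It then remains to substitute $1-c_x=2a^3$, $1+c_x=2b^3$, $1+c_y=2b'^3$, $s_x=2^{2/3}ab$, $s_y=2^{2/3}a'b'$ into the right-hand side of \eqref{eq:sx+y} and to check that it coincides with this expression: a finite algebraic identity in $a,b,a',b'$, in which the numerator of \eqref{eq:sx+y} becomes $8\cdot 2^{2/3}\,ab'^2(ab-a'b')(a^2b'-a'^2b)$ and its denominator becomes $8\,ab'^2(ab'^2-a'b^2)^2$, the common factor $8\,ab'^2$ cancelling. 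Since both sides of \eqref{eq:sx+y} are real-analytic in $(x,y)$ wherever defined, it is enough to carry this out on some nonempty open set, so the precise range bookkeeping (Theorem \ref{thm:maf1} yields the rescaling on $[0,\pi_{3/2,3}/4)$, to be extended afterwards via the symmetry and periodicity of the functions involved) may be postponed. For the double-angle formula I would let $\eta\to\xi$ above (l'H\^{o}pital) or invoke the $u=v$ case of Lemma \ref{lem:dixon} directly; the same substitution collapses the outcome to $s_{2x}=4s_xc_x(3+c_x)/\bigl((1+c_x)(3-c_x)^2\bigr)$, and since $8+s_x^3=9-c_x^2=(3-c_x)(3+c_x)$ this is exactly $4s_xc_x(3+c_x)^3/\bigl((1+c_x)(8+s_x^3)^2\bigr)$.

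The only genuine work is the algebraic matching in the addition step — clearing denominators and checking that two explicit rational functions of $a,b,a',b'$ agree — together with the fussy but routine bookkeeping of the ranges of validity and of the fact that both sides of \eqref{eq:sx+y} vanish when $x=y$, which is exactly why Lemma \ref{lem:dixon} records a separate formula in that case. Everything else is substitution.
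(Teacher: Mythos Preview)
Your proposal is correct and follows essentially the same route as the paper: both rescale via Theorem~\ref{thm:maf1} with $q=3$, feed Dixon's addition theorem (Lemma~\ref{lem:dixon}) into the dictionary $s_x=2^{2/3}ab$, $1-c_x=2a^3$, $1+c_x=2b^3$, and then verify the resulting algebraic identity. The paper's substitution $S_z=((1-c_z)/2)^{1/3}$, $C_z=((1+c_z)/2)^{2/3}$ and its trick of multiplying top and bottom by $(1-c_x)^{1/3}(1+c_y)^{2/3}$ is exactly your factoring out of $8ab'^2$ in different clothing; for $x=y$ the paper just says ``similarly proved,'' while you spell out the pleasant simplification $8+s_x^3=(3-c_x)(3+c_x)$.
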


\begin{proof}
Let $x,\ y,\ x+y \in[0,\pi_{2,3})$ with $x \neq y$. 
Then, $u:=x/2^{2/3},\ v:=y/2^{2/3}$ satisfies
$u,\ v,\ u+v \in[0,\pi_{3/2,3}/2)$ with $u \neq v$; hence, 
by Theorem \ref{thm:maf1} with $q=3$ and Lemma \ref{lem:dixon},
\begin{align*}
s_{x+y}
&=2^{2/3}S_{u+v}C_{u+v}^{1/2}\\
&=2^{2/3}\frac{(S_u^2C_v^{1/2}-C_u^{1/2}S_v^2)(S_uC_u^{1/2}-C_v^{1/2}S_v)}
{(S_uC_v-C_uS_v)^2},
\end{align*}
where $S_z:=\sin_{3/2,3}{z}$ and $C_z:=\cos_{3/2,3}{z}$.
Here, from \eqref{eq:mafcos} in Theorem \ref{thm:maf1},
\begin{align*}
S_z=\left(\frac{1-c_z}{2}\right)^{1/3},\quad
C_z=\left(\frac{1+c_z}{2}\right)^{2/3}.
\end{align*}
Substituting them, we obtain
\begin{align*}
s_{x+y}
&=2(s_x-s_y)\cdot\frac{(1-c_x)^{2/3}(1+c_y)^{1/3}-(1+c_x)^{1/3}(1-c_y)^{2/3}}{\left((1-c_x)^{1/3}(1+c_y)^{2/3}-(1+c_x)^{2/3}(1-c_y)^{1/3}\right)^2}.
\end{align*}
Multiplied the numerator and the denominator by $(1-c_x)^{1/3}(1+c_y)^{2/3}$, 
the fraction part of the right-hand side is equal to
$$\frac{(1-c_x)(1+c_y)-s_xs_y^2}{(1-c_x)(1+c_y)^2-2s_x^2s_y(1+c_y)+s_xs_y^2(1+c_x)},$$
which is the conclusion. Case $x=y$ is similarly proved.
\end{proof}

\begin{rem}
It follows from multiplying the numerator and the denominator of 
\eqref{eq:sx+y}
by $(1-c_x)(1+c_y)$ that
the right-hand side of 
\eqref{eq:sx+y} is symmetric with respect to $x$ and $y$. 
\end{rem}

\begin{thm}
\label{thm:3/2,2}
For $x\in[0,\pi_{3/2,2}/4)$,
\begin{equation}
\label{eq:3/2,2}
\sin_{3/2,2}(2x)=(\Phi \circ \Psi \circ \Phi^{-1})(\sin_{3/2,2}x),
\end{equation}
where
\begin{gather*}
\Phi(x)=\sqrt{1-\left(\frac{2-2x^2+2\sqrt{1-x^3}}{2+x^2+2\sqrt{1-x^3}}\right)^3},\\
\Psi(x)=\frac{4x \sqrt{1-x^3}(3+\sqrt{1-x^3})^3}{(1+\sqrt{1-x^3})(8+x^3)^2},\\
\Phi^{-1}(x)=\frac{6x-2(1-(1-x^2)^{1/3})^2}{(2+(1-x^2)^{1/3})^2}.
\end{gather*}
\end{thm}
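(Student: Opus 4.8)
The plan is to view $\sin_{3/2,2}$ as a reparametrization of $\cos_{2,3}$, to recognize $\Phi$ and $\Phi^{-1}$ as the algebraic ``half-period reflection'' maps for $\sin_{2,3}$, and then to let the double-angle formula of $\sin_{2,3}$ (Lemma \ref{lem:cox-shurman}) do the work. First I would build the bridge between the two functions: in $F_{3/2,2}(y)=\int_0^y(1-t^2)^{-2/3}\,dt$ substitute $t=(1-s^3)^{1/2}$. Then $(1-t^2)^{2/3}=s^2$ and $dt=-\tfrac32 s^2(1-s^3)^{-1/2}\,ds$, so the integral collapses to $\tfrac32\int_{(1-y^2)^{1/3}}^{1}(1-s^3)^{-1/2}\,ds$, so that
$$\sin_{3/2,2}^{-1}(y)=\tfrac32\left(\tfrac{\pi_{2,3}}{2}-\sin_{2,3}^{-1}\big((1-y^2)^{1/3}\big)\right)$$
(and, on letting $y\to1$, $\pi_{3/2,2}=\tfrac32\pi_{2,3}$). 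Rearranging and using $\cos_{2,3}^2+\sin_{2,3}^3=1$ gives the conjugation identity $\sin_{3/2,2}(x)=\cos_{2,3}(\tfrac{\pi_{2,3}}{2}-\tfrac23 x)$ for $x\in[0,\pi_{3/2,2}/2)$, under which $x\mapsto2x$ merely replaces $\tfrac23 x$ by $\tfrac43 x$ in the argument.

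Next I would identify $\Phi$ and $\Phi^{-1}$ with reflection maps for $\sin_{2,3}$. Writing $g(x):=\dfrac{2-2x^2+2\sqrt{1-x^3}}{2+x^2+2\sqrt{1-x^3}}$, so that $\Phi(x)=\sqrt{1-g(x)^3}$, I claim
$$g(\sin_{2,3}\beta)=\sin_{2,3}\left(\tfrac{\pi_{2,3}}{2}-\beta\right),\qquad \Phi^{-1}(\cos_{2,3}\theta)=\sin_{2,3}\left(\tfrac{\pi_{2,3}}{2}-\theta\right)$$
for $\beta,\theta\in[0,\pi_{2,3}/2)$. Both follow by applying the Cox--Shurman addition formula \eqref{eq:sx+y} with $x=\pi_{2,3}/2$ (where $\sin_{2,3}=1$ and $\cos_{2,3}=0$) and $y=\beta$: the denominator collapses to the perfect square $(1+\cos_{2,3}\beta-\sin_{2,3}\beta)^2$, so with the reflection $\sin_{2,3}(\pi_{2,3}-\phi)=\sin_{2,3}(\phi)$ one gets
$$\sin_{2,3}\left(\tfrac{\pi_{2,3}}{2}-\beta\right)=\frac{2(1-\sin_{2,3}\beta)(1+\cos_{2,3}\beta-\sin_{2,3}^2\beta)}{(1+\cos_{2,3}\beta-\sin_{2,3}\beta)^2};$$
the stated forms of $g$ and of $\Phi^{-1}(\cos_{2,3}\theta)$ (after writing $(1-\cos_{2,3}^2\theta)^{1/3}=\sin_{2,3}\theta$) then reduce to routine rational identities in $\sin_{2,3}$ and $\cos_{2,3}$ modulo $\cos_{2,3}^2=1-\sin_{2,3}^3$. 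The first identity gives $\Phi(\sin_{2,3}\beta)=\cos_{2,3}(\tfrac{\pi_{2,3}}{2}-\beta)$, and the two together confirm that $\Phi$ and $\Phi^{-1}$ are mutually inverse on the relevant ranges. (These identities could equally be obtained from Dixon's formulas, Lemma \ref{lem:dixon}, on which Lemma \ref{lem:cox-shurman} rests.)

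Then I would assemble the pieces. Fix $x\in[0,\pi_{3/2,2}/4)$; since $\pi_{3/2,2}=\tfrac32\pi_{2,3}$ we have $\tfrac23 x,\tfrac43 x\in[0,\pi_{2,3}/2)$, so no continuation of $\sin_{2,3}$ beyond its increasing branch is needed. By the first step and the second identity above, $\Phi^{-1}(\sin_{3/2,2}x)=\Phi^{-1}(\cos_{2,3}(\tfrac{\pi_{2,3}}{2}-\tfrac23 x))=\sin_{2,3}(\tfrac23 x)$. Substituting $\cos_{2,3}z=\sqrt{1-\sin_{2,3}^3 z}$ into the double-angle formula of Lemma \ref{lem:cox-shurman} shows $\Psi(\sin_{2,3}z)=\sin_{2,3}(2z)$, hence $\Psi(\Phi^{-1}(\sin_{3/2,2}x))=\sin_{2,3}(\tfrac43 x)$. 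Finally, by the first identity above and then the first step again, $\Phi(\sin_{2,3}(\tfrac43 x))=\cos_{2,3}(\tfrac{\pi_{2,3}}{2}-\tfrac43 x)=\sin_{3/2,2}(2x)$. Chaining the three equalities yields \eqref{eq:3/2,2}.

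The only genuinely new analytic input is the substitution in the first step; after that the argument is purely formal and driven by Lemma \ref{lem:cox-shurman}. I expect the fussiest part to be the second step: verifying that the \emph{explicit} rational-with-radicals expressions $g$ and $\Phi^{-1}$ really coincide with $\sin_{2,3}(\tfrac{\pi_{2,3}}{2}-\cdot\,)$ demands carrying out the reduction modulo $\cos_{2,3}^2=1-\sin_{2,3}^3$ without slips and consistently picking the nonnegative branch of each square root on the subinterval in play. The secondary point to watch is the range bookkeeping — confirming $\tfrac43 x<\pi_{2,3}/2$ on $[0,\pi_{3/2,2}/4)$, so that the addition and double-angle formulas of Lemma \ref{lem:cox-shurman} apply verbatim and the reflection identity for $\sin_{2,3}$ is used only where it is valid.
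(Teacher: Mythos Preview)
Your proposal is correct and follows essentially the same route as the paper: both establish the conjugation $\sin_{3/2,2}(x)=\cos_{2,3}(\tfrac{\pi_{2,3}}{2}-\tfrac{2}{3}x)$, identify $\Phi$ (and its inverse) with the reflection $\sin_{2,3}\beta\mapsto\cos_{2,3}(\tfrac{\pi_{2,3}}{2}-\beta)$ via the Cox--Shurman addition formula at $\pi_{2,3}/2$, identify $\Psi$ with the double-angle map of Lemma~\ref{lem:cox-shurman}, and chain them. The only cosmetic difference is that the paper invokes the general symmetry $\sin_{p,q}\big(\tfrac{\pi_{p,q}}{2}z\big)=\cos_{q^*,p^*}^{\,q^*-1}\big(\tfrac{\pi_{q^*,p^*}}{2}(1-z)\big)$ from the literature to obtain the conjugation, whereas you derive it by the direct substitution $t=(1-s^3)^{1/2}$ in the defining integral; the paper also sets $g(x)=\sin_{2,3}(4x/3)$ and writes $f(2x)=\Phi(g(x))$, $g(x)=\Psi(g(x/2))$, $g(x/2)=\Phi^{-1}(f(x))$, which is exactly your chain reparametrized.
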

\begin{proof}
Recall that for $z \in [0,2]$, 
\begin{gather*}
q\pi_{p,q}=p^*\pi_{q^*,p^*},\\
\sin_{p,q}\left(\frac{\pi_{p,q}}{2}z\right)=\cos_{q^*,p^*}^{q^*-1}\left(\frac{\pi_{q^*,p^*}}{2}(1-z)\right);
\end{gather*}
see \cite{Edmunds2012} and \cite{Kobayashi-Takeuchi}.
Let $x\in[0,\pi_{3/2,2}/4)=[0,3\pi_{2,3}/8)$.
Then, since $4x/\pi_{3/2,2} \in[0,1)$,
\begin{align*}
\sin_{3/2,2}(2x)
&=\cos_{2,3}\left(\frac{\pi_{2,3}}{2}-\frac{4}{3}x\right)
=\sqrt{1-\sin_{2,3}^3\left(\frac{\pi_{2,3}}{2}-\frac{4}{3}x\right)}.
\end{align*}
Applying \eqref{eq:sx+y}
in Lemma \ref{lem:cox-shurman},
we obtain
\begin{align*}
\sin_{2,3}\left(\frac{\pi_{2,3}}{2}-\frac{4}{3}x\right)
&=\sin_{2,3}\left(\frac{\pi_{2,3}}{2}+\frac{4}{3}x\right)\\
&=\frac{2(1-\sin_{2,3}(4x/3))\left(1+\cos_{2,3}(4x/3)-\sin^2_{2,3}(4x/3)\right)}{(1+\cos_{2,3}(4x/3)-\sin_{2,3}(4x/3))^2};
\end{align*}
hence
$$\sin_{3/2,2}(2x)=\sqrt{1-\left(\frac{2(1-\sin_{2,3}(4x/3))\left(1+\cos_{2,3}(4x/3)-\sin^2_{2,3}(4x/3)\right)}{\left(1+\cos_{2,3}(4x/3)-\sin_{2,3}(4x/3)\right)^2}\right)^3}.$$

Let $f(x):=\sin_{3/2,2}x$ and $g(x):=\sin_{2,3}(4x/3)$. Then,
\begin{align}
f(2x)
&=\sqrt{1-\left(\frac{2(1-g(x))\left(1+\sqrt{1-g(x)^3}-g(x)^2\right)}{\left(1+\sqrt{1-g(x)^3}-g(x)\right)^2}\right)^3}\nonumber\\
&=\sqrt{1-\left(\frac{2+2\sqrt{1-g(x)^3}-2g(x)^2}{2+2\sqrt{1-g(x)^3}+g(x)^2}\right)^3}
\nonumber \\
&=\Phi(g(x)). \label{eq:Phi}
\end{align}
Therefore, it is easy to see that
\begin{equation}
\label{eq:Phii}
g(x)
=\Phi^{-1}(f(2x))
=\frac{6f(2x)-2(1-\sqrt[3]{1-f(2x)^2})^2}{(2+\sqrt[3]{1-f(2x)^2})^2}.
\end{equation}

On the other hand, by the double-angle formula in Lemma \ref{lem:cox-shurman},
we see that $g(x)$ satisfies
\begin{equation}
\label{eq:Psi}
g(x)=\frac{4g(x/2)\sqrt{1-g(x/2)^3}(3+\sqrt{1-g(x/2)^3})^3}{(1+\sqrt{1-g(x/2)^3})(8+g(x/2)^3)^2}=\Psi \left(g\left(\frac{x}{2}\right)\right).
\end{equation}
Thus, using \eqref{eq:Phi}, \eqref{eq:Psi} and \eqref{eq:Phii} in this order, we obain 
$$f(2x)=\Phi(g(x))=\Phi \left(\Psi \left(g\left(\frac{x}{2}\right)\right) \right)=\Phi(\Psi(\Phi^{-1}(f(x)))),$$
which is the desired conclusion.
\end{proof}

\begin{rem}
Since the left-hand side of \eqref{eq:3/2,2}
converges to $1$ as $x \to \pi_{3/2,2}/4-0$,
we obtain 
$$\sin_{3/2,2}{\frac{\pi_{3/2,2}}{4}}
=\sqrt{135+78\sqrt{3}-6\sqrt{6(168+97\sqrt{3})}}
\approx 0.834896.$$ 
\end{rem}

\section{Generalization of Tangent Function} 
\label{sec:4}

Let $x \in [0,\pi_{p,q}/2)$ for simplicity.
Generalized tangent function is usually defined as
$$\tan_{p,q}{x}:=\frac{\sin_{p,q}{x}}{\cos_{p,q}{x}}.$$
Now, we propose another generalization of the tangent function:
$$\tau_{p,q}(x)=\frac{\sin_{p,q}{x}}{\cos_{p,q}^{p/q}{x}},$$
which coincides with $\tan_{p,q}{x}$ if $p=q$.
Then, $\tau_{p,q}(x)$ has properties very similar to $\tan{x}$ more than $\tan_{p,q}{x}$:
\begin{gather*}
1+\tau_{p,q}^{q}(x)=\frac{1}{\cos_{p,q}^p{x}},\\
\frac{d}{dx}(\tau_{p,q}(x))=\frac{1}{\cos_{p,q}^{p/q+p-1}{x}}=(1+\tau_{p,q}^{q}(x))^{1/q+1-1/p},\\
\tau_{2,q}(2^{2/q}x)=\frac{2^{2/q}\tau_{q^*,q}(x)}{(1-\tau_{q^*,q}^q(x))^{2/q}}.
\end{gather*}
Moreover, Lemma \ref{lem:sinhtosin} gives
\begin{equation}
\label{eq:reptau}
\sinh_{p,q}{x}=\tau_{r,q}(x), \quad \cosh_{p,q}{x}=(1+\tau_{r,q}^q(x))^{1/p},
\end{equation}
where $r=pq/(pq+p-q)$.
In particular, Lemma \ref{lem:sintosinh} and \eqref{eq:reptau} with \eqref{eq:r=p}
yield
$$\tan_{p,q}{x}=\tau_{p,q}(x)(1+\tau_{p,q}^q(x))^{1/p-1/q}.$$

Let us apply this function to integration. 
Setting $\tau_{q^*,q}(x/2^{2/q})=t$, we have
\begin{gather*}
\sin_{2,q}{x}=\frac{2^{2/q}\sin_{q^*,q}{(x/2^{2/q})}\cos_{q^*,q}^{q^*-1}{(x/2^{2/q})}}
{(\cos_{q^*,q}^{q^*}{(x/2^{2/q})}+\sin_{q^*,q}^{q}{(x/2^{2/q}))^{2/q}}}
=\frac{2^{2/q}t}{(1+t^q)^{2/q}},\\
\cos_{2,q}{x}=\frac{\cos_{q^*,q}^{q^*}{(x/2^{2/q})}-\sin_{q^*,q}^q{(x/2^{2/q})}}{\cos_{q^*,q}^{q^*}{(x/2^{2/q})}+\sin_{q^*,q}^q{(x/2^{2/q})}}
=\frac{1-t^q}{1+t^q},\\
\tan_{2,q}{x}=\frac{\sin_{2,q}{x}}{\cos_{2,q}{x}}=\frac{2^{2/q}t}{(1+t^q)^{2/q-1}(1-t^q)},\\
\tau_{2,q}{(x)}=\frac{2^{2/q}\tau_{q^*,q}{(x/2^{2/p})}}{(1-\tau_{q^*,q}{(x/2^{2/q})})^{2/q}}=\frac{2^{2/q}t}{(1-t^q)^{2/q}},\\
\frac{dt}{dx}=\frac{1}{2^{2/q}\cos_{q^*,q}^{2/(q-1)}{(x/2^{2/p})}}
=\frac{(1+t^q)^{2/q}}{2^{2/q}}.
\end{gather*}
For example, 
\begin{align*}
\int \frac{dx}{\sin_{2,q}{x}}
&=\int \frac{dt}{t}=\log{|t|}+C=\log{|\tau_{q^*,q}{(2^{2/q}x)}|}+C,\\
\int \frac{dx}{\cos_{2,q}^{2/q}{x}}
&=2^{2/q}\int \frac{dt}{(1-t^q)^{2/q}}
=2^{2/q}\sin_{q/2,q}^{-1}{t}+C\\
&=2^{2/q}\sin_{q/2,q}^{-1}{(\tau_{q^*,q}{(2^{2/q}x)})}+C.
\end{align*}




\end{document}